\documentclass[11pt,reqno]{amsart}

\usepackage{graphicx}
\usepackage{mathrsfs}
\usepackage{color}
\usepackage{comment}
\usepackage{amssymb}
\usepackage{esint}

\allowdisplaybreaks

\usepackage[margin = 1.2in] {geometry}

\usepackage[hyperindex,breaklinks]{hyperref}

\newtheorem{prop}{Proposition}%[section]
\newtheorem{theo}[prop]{Theorem}
\newtheorem*{theo*}{Theorem}
\newtheorem{lemm}[prop]{Lemma}
\newtheorem{coro}[prop]{Corollary}

\theoremstyle{definition}
\newtheorem{rema}[prop]{Remark}

\newtheorem{conj}[prop]{Conjecture}

\newcommand{\BB}{\mathbb{B}}
\newcommand{\CC}{\mathbb{C}}
\newcommand{\EE}{\mathbb{E}}
\newcommand{\RR}{\mathbb{R}}
\newcommand{\KK}{\mathbb{K}}
\newcommand{\HH}{\mathbb{H}}
\newcommand{\NN}{\mathbb{N}}
\newcommand{\OO}{\mathbb{O}}
\newcommand{\QQ}{\mathbb{Q}}
\renewcommand{\SS}{\mathbb{S}}
\newcommand{\ZZ}{\mathbb{Z}}

\newcommand{\bA}{\mathbf{A}}

\newcommand{\bH}{\mathbf{H}}

\newcommand{\bS}{\mathbf{S}}

\newcommand{\bW}{\mathbf{W}}
\newcommand{\bX}{\mathbf{X}}
\newcommand{\bY}{\mathbf{Y}}
\newcommand{\bZ}{\mathbf{Z}}

\newcommand{\bx}{\mathbf{x}}
\newcommand{\by}{\mathbf{y}}
\newcommand{\bz}{\mathbf{z}}

\newcommand{\cC}{\mathcal C}

\newcommand{\cQ}{\mathcal Q}

\newcommand{\cU}{\mathcal U}

\newcommand{\fc}{\mathfrak{c}}

\DeclareMathOperator{\tr}{tr}

\DeclareMathOperator{\Span}{span}

\DeclareMathOperator{\supp}{supp}

\DeclareMathOperator{\Ric}{Ric}
\DeclareMathOperator{\BiRic}{BiRic}
\DeclareMathOperator{\scal}{scal}
\DeclareMathOperator{\Rm}{Rm}

\usepackage{stmaryrd} 
\newcommand{\KN}{\mathbin{\owedge}}

\newcommand{\bangle}[1]{\left\langle #1 \right\rangle}

\newcommand{\eps}{\varepsilon}

\begin{document}

\title{Immersions with small normal curvature}
\author{Otis Chodosh}
\address{Department of Mathematics, Stanford University, Building 380, Stanford, CA 94305, USA}
\email{ochodosh@stanford.edu}
\author{Chao Li}
\address{Courant Institute, New York University, 251 Mercer St, New York, NY 10012, USA}
\email{chaoli@nyu.edu}

\begin{abstract}
We study Gromov's problem concerning minimal normal curvature immersions in the unit ball. In particular, we determine the minimal possible value of the normal curvature of an $S^n\times S^1$. We also prove a differentiable sphere theorem and an existence result for minimizers in this context. 
\end{abstract}

\maketitle

\section{Introduction}

Fix a smooth manifold $X$. For an immersion into the closed unit $N$-ball $f : X\to\BB^N$ with second fundamental form $\bA$, we define the \emph{maximal normal curvature} of $f$ as
\begin{equation}\label{eq:defn-nc}
\fc(f) : = \sup_{|u|=1} |\bA(u,u)|,
\end{equation}
where $|u|$ is measured with respect to the pullback metric $g=f^*g_{\RR^N}$. In this paper we study Gromov's \emph{minimal curvature invariant} (see \cite{Gromov:immersions1,Gromov:immersions2,Gromov:notes})
\begin{equation}\label{eq:grom-invar}
\cC_N(X) : = \inf \fc(f) 
\end{equation}
where the infimum is taken over immersions $f:X^n\to\BB^N$. It is straightforward to see that $\cC_N(X)$ is a decreasing function of $N$. Its behavior as $N$ tends to infinity captures intriguing topological properties of $X$, and is the main concern of this paper.

We list the closed $X$ for which the value of $\cC_N(X)$ is known for sufficiently large $N$. 
\begin{enumerate}
\item The $n$-sphere has $\cC_N(S^n) = 1$ for $N \geq n+1$ (achieved by the inclusion $\partial \BB^{n+1} \subset \partial \BB^{n+1}$). The maximum principle implies that $\cC_N(X) \geq 1$ always holds. 
\item The $n$-torus has $\cC_N(T^n) = \sqrt{\frac{3n}{n+2}}$ for $N \gg n$. Immersions achieving this bound were found by Gromov using spherical designs \cite{Gromov:immersions1,Gromov:immersions2}. These immersions were shown to be optimal by Petrunin \cite{Petrunin:tori}. 
\item The aforementioned result on $T^n$ implies that for any closed manifold $X$, $\cC_N(X) < \sqrt 3$ for $N\gg n$ \cite{Gromov:immersions1,Gromov:immersions2}. This may be obtained by composing an inflated Whitney embedding of $X$ into $\RR^M$ (with arbitrarily small normal curvature) and an immersion $T^{M}\to \BB^{N}$ with the optimal normal curvature.
\item The projective planes have: $\cC_N(\KK P^2) = \sqrt{\frac 43}$ for $\KK \in \{\RR,\CC,\HH,\OO\}$ and $N \geq 5$ for $\RR P^2$, $N\geq 8$ for $\CC P^2$, $N\geq 14$ for $\HH P^2$, $N \geq 26$ for $\OO P^2$. The ``Veronese immersions'' achieve these bounds (cf.\ Section \ref{subsec:Veronese}) and these immersions were shown to be optimal by Petrunin \cite{Petrunin:veronese}.  
\end{enumerate}

\subsection{The minimal curvature invariant of \texorpdfstring{$S^n\times S^1$}{Sn x S1}} One of our main results is to compute the minimal curvature invariant of $S^n \times S^1$ (for $N$ sufficiently large):
\begin{theo}\label{theo:SnS1}
For $n\geq 2$ we have $\cC_{N}(S^n \times S^1) = \sqrt{\frac 32}$ for $N \geq 2n+4$. 
\end{theo}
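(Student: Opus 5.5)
The proof splits into an explicit construction giving $\cC_{2n+4}(S^n\times S^1)\le\sqrt{3/2}$ and a lower bound $\fc(f)\ge\sqrt{3/2}$ valid for every immersion $f:S^n\times S^1\to\BB^N$ and every $N$. Since $\cC_N$ is decreasing in $N$, these two facts give the theorem for all $N\ge 2n+4$.

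\emph{Upper bound.} Identify $\RR^{2n+4}=\CC^{n+1}\times\CC$. For $x\in S^n\subset\RR^{n+1}$ and $\theta\in\RR/2\pi\ZZ$, consider
\[
f(x,\theta)=\bigl(\alpha\, x\, e^{i\theta},\ \beta\, e^{ik\theta}\bigr),\qquad \alpha^2+\beta^2=1,
\]
where $k$ is an integer to be chosen. The image lies in $\partial\BB^{2n+4}$. The pullback metric is $\alpha^2 g_{S^n}+(\alpha^2+k^2\beta^2)\,d\theta^2$, so $f$ is an immersion, and it is invariant under the isometric action of $O(n+1)\times S^1$.

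The first thing to settle is topology. The map is injective on $S^n\times S^1$ when $k$ is odd. When $k$ is even it factors through the quotient by $(x,\theta)\mapsto(-x,\theta+\pi)$; that quotient is the mapping torus of the antipodal map, which is diffeomorphic to $S^n\times S^1$ exactly when $n$ is odd. I will choose the parity of $k$ accordingly.

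By homogeneity, $\bA$ only needs to be computed at a single point. For a unit vector $u$ tangent to the $S^n$ factor, the second derivative is orthogonal to $\partial_\theta f$, and this gives $|\bA(u,u)|=1/\alpha$. For a general unit vector $v=\cos\phi\, u+\sin\phi\, T$, with $T$ the unit $\theta$-direction, I will compute $|\bA(v,v)|^2$ explicitly as a quadratic form in $(\cos^2\phi,\sin^2\phi,\cos\phi\sin\phi)$. I then expect that choosing $\alpha^2=2/3$ together with a suitable $k$ makes the maximum over $\phi$ equal to $\sqrt{3/2}$. If this two-frequency family does not attain the bound, my fallback is to replace $e^{ik\theta}$ by a short combination of harmonics, in the spirit of the Gromov design constructions; that would still fit in $2n+4$ dimensions only after checking the count carefully.

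\emph{Lower bound.} I would argue along the lines of Petrunin's torus bound. Lift $f$ to the universal cover $S^n\times\RR$, which is possible since $n\ge 2$. Take a geodesic line $\gamma$ there, for instance a limit of minimizing segments of length $L\to\infty$ in the $\RR$-direction. Write $F=f\circ\gamma$. Since $(|F|^2)''=2+2\langle f,\bA(\gamma',\gamma')\rangle$ and $|F|\le1$, averaging over long stretches gives
\[
\overline{\langle f,\bA(\gamma',\gamma')\rangle}=-1,
\]
where the bar denotes the average along $\gamma$. In particular the average of $|\bA(\gamma',\gamma')|$ is at least $1$.

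Next, stability of a line gives $\int \Ric(\gamma',\gamma')\,\eta^2\le (n)\int\eta'^2$ for every cutoff $\eta$, where the factor $n$ comes from summing over parallel normal fields $\eta e_i$. Letting the cutoffs spread out, the averaged Ricci curvature along $\gamma$ is therefore $\le 0$. On the other hand, the Gauss equation gives
\[
\Ric(\gamma',\gamma')=\sum_i\Bigl(\langle \bA(\gamma',\gamma'),\bA(e_i,e_i)\rangle-|\bA(\gamma',e_i)|^2\Bigr).
\]

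The key pointwise step, which I expect to be the main obstacle, is an algebraic lemma. It should show that $|\bA(u,u)|\le c$ for all unit $u$ implies
\[
\langle\bA(e,e),\bA(w,w)\rangle-|\bA(e,w)|^2\ \ge\ \lambda(c)\,|\bA(e,e)|^2-\mu(c),
\]
with $\lambda,\mu$ such that the right-hand side is positive in the average once $\overline{|\bA(\gamma',\gamma')|}\ge1$ and $c^2<3/2$. The natural tool is polarization: apply the hypothesis to $\frac{e\pm w}{\sqrt2}$ and to $\cos t\, e+\sin t\, w$, then optimize in $t$. The expected equality case is $|\bA(e,e)|=|\bA(w,w)|$ with the biradial term equal to $1/2$, which should produce exactly the threshold $3/2$.

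If this lemma holds, then $c<\sqrt{3/2}$ forces the averaged Ricci (or bi-Ricci) curvature along $\gamma$ to be positive, contradicting the existence of a line. Hence $\fc(f)\ge\sqrt{3/2}$ for every immersion, and together with the upper bound this proves the theorem.
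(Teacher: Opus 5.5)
\textbf{Lower bound.} This part has a genuine gap. The pointwise lemma it relies on is false, and no lemma of that type can work. Take orthonormal $e,w$ with $\bA(w,w)=-\bA(e,e)$ and all other components of $\bA$ zero. Then $\bA(u,u)=(\bangle{u,e}^2-\bangle{u,w}^2)\,\bA(e,e)$, so the curvature bound holds with $c=|\bA(e,e)|$, yet $\sec(e,w)=-|\bA(e,e)|^2$. Hence any valid right-hand side satisfies $\lambda(c)|\bA(e,e)|^2-\mu(c)\le-|\bA(e,e)|^2\le 0$, and it can never be positive.

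More decisively, everything your argument extracts along $\gamma$ is compatible with negative Ricci curvature. Let $\gamma$ run along a great circle of $\partial\BB^N$ with $\bA(\gamma',\gamma')=-\bx$, $\bA(e_i,e_i)=\bx$, and all mixed terms zero. This is realized locally by $(y,s)\mapsto\bigl((1+\frac12|y|^2)(\cos s,\sin s),y\bigr)$ near $y=0$. Along $\gamma$ the following all hold:
\begin{itemize}
\item $\fc=1$;
\item $\bangle{\bx,\bA(\gamma',\gamma')}=-1$ holds pointwise, not just on average;
\item the index form is positive definite.
\end{itemize}
Yet $\Ric(\gamma',\gamma')=-n$. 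The averaged identity only sees the ball constraint in the direction $\gamma'$. But $\sec(\gamma',e_i)$ is driven by $\bA(e_i,e_i)$, about which your argument has no information.

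The missing idea is to feed the constraint $|\bx|\le1$ into every direction. The paper does this with the conformal metric $\tilde g=e^{-3|\bx|^2}g$.
\begin{itemize}
\item The Hessian of $\psi=-\frac32|\bx|^2$ adds $6+3\bangle{\bx^\perp,\bA(e_i,e_i)+\bA(e_j,e_j)}$, plus gradient terms, to $e^{2\psi}\widetilde{\sec}_{ij}$.
\item The constant $3$ is exactly what cancels the uncontrollable cross term $\bangle{\bA(e_i,e_i),\bA(e_j,e_j)}$ after Lemma \ref{lemm:sff-bds-off-diag}.
\item The gradient term is handled by Petrunin's bow-lemma estimate $|\bx^\perp|\ge 1+\frac c2(|\bx|^2-1)$ (Lemma \ref{lemm:angle}).
\end{itemize}
Together these give $e^{2\psi}\widetilde{\sec}\ge 3-2\fc(f)^2>0$, and Bonnet--Myers ($\pi_1=\ZZ$) finishes. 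Your averaging and stability steps are correct in themselves. They only become effective if run in $\tilde g$, where they amount to Bonnet--Myers.

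\textbf{Upper bound.} Your family with $k=2$, $\alpha^2=\frac23$, $\beta^2=\frac13$ is exactly \eqref{eq:SnS1map}. However, the parity discussion is a mistake, and following it would derail every even $n$, including $n=2$. An immersion need not be injective. Composing the covering $S^n\times S^1\to(S^n\times S^1)/\ZZ_2$ with an immersion of the quotient is still an immersion of $S^n\times S^1$, whether or not the quotient is diffeomorphic to $S^n\times S^1$.

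Odd $k$ cannot attain the bound. For $v$ proportional to $a\,u+b\,\partial_\theta$ one finds
\[
|\bA(v,v)|^2=\frac{\alpha^2a^4+6\alpha^2a^2b^2+(\alpha^2+k^4\beta^2)b^4}{\bigl(\alpha^2a^2+(\alpha^2+k^2\beta^2)b^2\bigr)^2}.
\]
Isotropy therefore forces $k^2\beta^2=2\alpha^2$ and $k^4\beta^2=8\alpha^2$, i.e.\ $k=2$. For example, $k=1$ already gives $\fc\ge\sqrt2$ (take $\alpha^2a^2=b^2=\frac12$). So take $k=2$ for every $n$, and then use that $\cC_N$ is decreasing in $N$.
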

An immersion of $S^n\times S^1$ into $\CC P^{n+1}$ with low normal curvature was constructed by Naitoh \cite[Theorem 6.5]{Naitoh}. Lifting this to an immersion into $\SS^{2n+3}$ via the Hopf map and then composing with the inclusion into $\BB^{2n+4}$ yields an immersion with normal curvature $\sqrt{\frac32}$. In Section \ref{sec:examples} we describe a (seemingly) different method for finding a map with the same curvature bound. 

The bound in the other direction is obtained as follows. We recall that in \cite{Petrunin:tori}, Petrunin proved $\cC_N(X^n) \geq \sqrt{\frac{3n}{n+2}}$ for a closed manifold $X^n$ not admitting a metric of positive scalar curvature. A key idea in this paper is to generalize this method to other curvature conditions. 
\begin{prop}\label{prop:secPetrunin}
For $X$ closed, if $f : X \to \BB^N$ has $\fc(f) < \sqrt{\frac 32}$ then the induced metric $g=f^*g_{\RR^N}$ is conformally equivalent to a metric of positive sectional curvature. 
\end{prop}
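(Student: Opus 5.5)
The plan is to combine the Gauss equation with a conformal change whose conformal factor is built from $|f|^2$, in the spirit of Petrunin's argument for scalar curvature. I treat the sectional-curvature inequality pointwise, one tangent $2$-plane at a time. I have not checked that the constants close at exactly $\sqrt{3/2}$; that is the main risk.

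\emph{Step 1: curvature lower bound.} Fix a point and a $g$-orthonormal pair $u,v$. Set $a=\bA(u,u)$, $b=\bA(v,v)$, $H=\tfrac12(a+b)$, and $c=\fc(f)$. Let $w_\theta=\cos\theta\,u+\sin\theta\,v$. Expanding $|\bA(w_\theta,w_\theta)|^2$ in Fourier modes and averaging over $\theta$ gives
\[
\fint |\bA(w_\theta,w_\theta)|^2\,d\theta=|H|^2+\tfrac18|a-b|^2+\tfrac12|\bA(u,v)|^2\le c^2 .
\]
The Gauss equation gives
\[
K(u,v)=\langle a,b\rangle-|\bA(u,v)|^2=|H|^2-\tfrac14|a-b|^2-|\bA(u,v)|^2 .
\]
Combining the two yields
\[
K(u,v)\ge 3|H|^2-2c^2 .
\]
So the curvature is only in danger where the mean curvature of the plane is small. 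This is where $\frac32$ enters: if $|H|^2\ge\frac23c^2$ is forced to be not much smaller, positivity follows.

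\emph{Step 2: conformal change.} Take $\tilde g=e^{2\varphi}g$ with $\varphi=\psi(|f|^2/2)$ and $\psi$ decreasing, starting with $\psi(s)=-\lambda s$. Let $r=|f|^2/2$. Then
\[
\operatorname{Hess}_g r\,(w,w)=1+\langle f,\bA(w,w)\rangle, \qquad \nabla r=f^T .
\]
The conformal change formula for orthonormal $u,v$ gives
\[
e^{2\varphi}\tilde K(u,v)=K(u,v)-\operatorname{Hess}\varphi(u,u)-\operatorname{Hess}\varphi(v,v)+d\varphi(u)^2+d\varphi(v)^2-|\nabla\varphi|^2 .
\]
The Hessian terms contribute $\lambda\bigl(2+2\langle f^\perp,H\rangle\bigr)$. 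The gradient terms contribute at least $-\lambda^2|f^T|^2$.

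Write $s=|f^\perp|$ and $t=|H|$. Using $|f^T|^2\le 1-s^2$ (since $f$ maps into $\BB^N$), the right-hand side is bounded below by
\[
3t^2-2c^2+2\lambda-2\lambda st-\lambda^2(1-s^2).
\]
I then minimize this over $t\ge0$ and $s\in[0,1]$, and maximize over $\lambda$.

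\emph{Main obstacle.} A crude evaluation of this expression gives positivity only for a smaller threshold than $c^2<\frac32$. So the hard step is sharpening it. I would try the following, in order.

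First, keep the exact gradient term $-|\nabla\varphi|^2+d\varphi(u)^2+d\varphi(v)^2$, which only involves the component of $f^T$ orthogonal to the plane. I would also use the full Hessian $\psi''$ term with a nonlinear $\psi$, for instance $\psi(s)=-\log(1+\mu-s)$, or more generally solve the ODE optimization for $\psi$.

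Second, use the linear term $\langle f^\perp,H\rangle$ together with $3|H|^2$ more carefully, completing the square with the $\psi''\langle f,u\rangle^2$ contributions.

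Third, if needed, exploit that the inequality only needs to hold for the worst plane, and use compactness of $X$ to get a uniform positive margin.

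The target is an inequality of the form $\tilde K\ge \delta(3-2c^2)>0$ pointwise. Closedness of $X$ then turns this strict pointwise positivity into a metric of positive sectional curvature conformal to $g$.
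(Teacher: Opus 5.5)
Your setup matches the paper's. The choice $\varphi=-\lambda|f|^2/2$ is its $\psi=-\frac c2 r^2$. Your circle-averaged bound $K(u,v)\ge 3|H|^2-2c^2$ is a clean substitute for Lemma \ref{lemm:sff-bds-off-diag} together with $|\bA(e_i,e_i)|\le\fc(f)$. Minimizing over $t=|H|$ gives $e^{2\varphi}\tilde K\ge 2\lambda-2c^2+\tfrac{2}{3}\lambda^2 s^2-\lambda^2|f|^2$. At $\lambda=3$ this is exactly the paper's intermediate bound $6-2c^2+6|f^\perp|^2-9|f|^2$.

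\textbf{The gap.} In the last step the only constraint you place on the position vector is $|f|\le 1$, and with that alone no argument of this type can close. Your crude bound yields $\max_\lambda(2\lambda-\lambda^2)-2c^2=1-2c^2$, which is never positive because $\fc(f)\ge 1$. Your proposed remedies cannot help either, because the bad configuration is admissible pointwise. For $n\ge 3$, take a point with:
\begin{itemize}
\item $f^\perp=0$, $f^T\perp\Span\{u,v\}$ and $|f|=\rho$;
\item $\bA(u,u)=-\bA(v,v)=c\,\nu_1$, $\bA(u,v)=c\,\nu_2$, and $\bA=0$ in the remaining directions.
\end{itemize}
Then $|\bA(w,w)|\le c$ for every unit $w$ and $K(u,v)=-2c^2$. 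For every radial $\varphi=\Phi(|f|)$ the conformal formula gives $e^{2\varphi}\tilde K(u,v)=-2c^2-2\Phi'(\rho)/\rho-\Phi'(\rho)^2\le \rho^{-2}-2c^2$. This is negative once $\rho>1/\sqrt2$. No nonlinear $\psi$, sharper bookkeeping of the gradient or $\psi''$ terms, or compactness argument changes this.

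\textbf{The missing idea.} You need a global input that excludes this configuration: the generalized Petrunin angle estimate, Lemma \ref{lemm:angle}. It says that if $\fc(f)\le c'<2$, then $|f^\perp|\ge 1+\frac{c'}{2}(|f|^2-1)$. It is proved with the bow lemma. If $f$ were too tangential at a point near $\partial\BB^N$, consider the image of the geodesic leaving in direction $f^T/|f^T|$, whose curvature is $\le c'$. Comparison with a circular arc of radius $1/c'$ forces it out of the ball.

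This also explains $\lambda=3$ and the threshold. At $|f|=|f^\perp|=1$ your expression equals $2\lambda-\lambda^2/3-2c^2$, which is maximized at $\lambda=3$ with value $3-2c^2$; this is where $\sqrt{3/2}$ comes from.

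To finish, take $\lambda=3$ and $c'=\sqrt{3/2}$, and write $y=|f|^2$. The angle estimate bounds $6|f^\perp|^2-9|f|^2$ below by $6\bigl(1-\frac{c'}{2}+\frac{c'}{2}y\bigr)^2-9y$. This function is convex, and its derivative at $y=1$ is $6c'-9<0$, so it is decreasing on $[0,1]$ and hence $\ge -3$. Therefore $e^{2\varphi}\tilde K\ge 3-2\fc(f)^2>0$.
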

Since $S^n\times S^1$ does not admit a metric of positive Ricci (much less sectional) curvature, this proves the lower bound. 

\begin{rema}\label{rema:RP3}
We observe that $\sqrt\frac32$ is also the normal curvature of the Veronese embedding of $\RR P^3$ (see Section \ref{subsec:Veronese} and note that $\varphi_{3,2}$ factors through $\RR P^3$). In light of Petrunin's work \cite{Petrunin:veronese}, it is natural to conjecture that if $f : X^3\to\BB^N$ has $\fc(f) \leq \sqrt{\frac 32}$ then $X^3$ is diffeomorphic to $S^3,\RR P^3$, or $S^2\times S^1$. 
\end{rema}

\subsection{Existence of minimizers}
The conformal factor in Proposition \ref{prop:secPetrunin} is explicit and uniformly bounded. As such, examining the proof of Proposition \ref{prop:secPetrunin}, we see that if $f:X^n\to \BB^N$ has $\fc(f) \leq \sqrt{\frac 32} - \delta$ then $(X^n,g=f^*g_{\RR^N})$ has intrinsic diameter $\leq D(\delta)$. In particular, this implies existence of (weak) minimizers in \eqref{eq:grom-invar}. More precisely the following result follows immediately from the intrinsic diameter bound and standard compactness results for immersions:
\begin{coro}
For $X^n$ closed, suppose that $\cC_N(X^n) < \sqrt{\frac32}$. Then there exists a $C^{1,1}$-immersion $f : X^n \to \BB^N$ so that $f$ has $|A(u,u)| \leq \cC_N(X)$ for almost every $p \in X$ and unit vector $u \in T_pX$. 
\end{coro}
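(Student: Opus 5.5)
We combine the intrinsic diameter bound with a compactness argument for immersions of bounded second fundamental form.

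\textbf{Step 1: minimizing sequence with uniform control.} Take immersions $f_k : X\to\BB^N$ with $\fc(f_k)\to \cC_N(X)$. Since $\cC_N(X)<\sqrt{3/2}$, we may fix $\delta>0$ with $\fc(f_k)\le \sqrt{3/2}-\delta$ for all $k$. In the proof of Proposition \ref{prop:secPetrunin} the conformal factor is explicit and uniformly bounded. I expect it to be a function of $|f|^2$, comparable above and below by constants depending only on $\delta$. The conformal metric $\tilde g_k$ then has sectional curvature bounded below by some $\kappa(\delta)>0$. By Bonnet--Myers, $\operatorname{diam}(\tilde g_k)\le \pi/\sqrt{\kappa}$, and hence $\operatorname{diam}(g_k)\le D(\delta)$ for the induced metrics $g_k=f_k^*g_{\RR^N}$.

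\textbf{Step 2: uniform geometry.} The Gauss equation, together with $|\bA_k|\le C\,\fc(f_k)$ (polarization of $\bA(u,u)$), gives $|\mathrm{Sec}(g_k)|\le C'$. We need a lower bound on injectivity radius, or we should work locally. Here immersions help: $f_k$ is locally a graph over its tangent plane on an extrinsic ball of radius $r_0=r_0(\fc)$ with $C^{1,1}$ bounds. Pulling back via the exponential map of $g_k$ at each point therefore gives uniform $C^{1,1}$ harmonic or normal coordinate charts. By Cheeger--Gromov compactness with bounded curvature and bounded diameter, together with a volume bound, a subsequence converges. The volume bound is $\operatorname{vol}\le$ the covering number of a diameter-$D$ space by $r_0$-balls, each of bounded volume. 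Injectivity radius is bounded below because the $f_k$ are uniformly locally embedded graphs and we have the curvature and volume bounds (Cheeger's lemma).

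\textbf{Step 3: extract the limit.} After passing to a subsequence there are diffeomorphisms $\phi_k : X\to X$ such that $F_k=f_k\circ\phi_k$ are uniformly bounded in $C^{1,1}$ with respect to a fixed background metric on $X$, and have differentials uniformly bounded below, since $F_k^*g_{\RR^N}$ is comparable to the background metric. By Arzel\`a--Ascoli, $F_k\to f$ in $C^{1,\alpha}$ and weak-$*$ in $W^{2,\infty}$, and $f$ is a $C^{1,1}$ immersion into $\BB^N$.

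\textbf{Step 4: pass the curvature bound to the limit.} For the normal curvature, in local graph coordinates $\bA(u,u)$ is the normal component of second derivatives. These converge weakly-$*$, while the normal projection and the metric converge uniformly. The set of admissible $(\text{Hessian data})$ at a point, namely those with $|\bA(u,u)|\le c$ for all unit $u$, is convex. Hence the weak limit satisfies $|\bA(u,u)|\le \lim\fc(f_k)=\cC_N(X)$ almost everywhere, with $u$ ranging over a countable dense set and then all $u$ by continuity.

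The main obstacle is Step 2: producing uniform charts, i.e.\ the injectivity radius and volume control. I would first try the direct approach of covering by extrinsic graph patches of fixed size, using the diameter bound to limit their number.
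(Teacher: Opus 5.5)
Your proposal is correct and follows essentially the same route as the paper. The paper's argument also runs through the explicit conformal factor $e^{-3|f|^2}\in[e^{-3},1]$ and the bound $\widetilde{\sec}\geq 3-2\fc(f)^2$, which give a uniform intrinsic diameter bound via Bonnet--Myers; it then simply appeals to standard compactness for immersions with bounded second fundamental form, which your Steps 2--4 spell out, including the weak-$*$ closedness argument for the curvature bound.

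One small slip in Step 2: Cheeger's lemma needs a \emph{lower} volume bound, which your uniform graph patches supply. The covering-number bound, and with it the upper volume bound, itself requires Bishop--Gromov together with the sectional curvature lower bound you already have.
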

One may define several notions of weak normal curvature of a map $f$ and we expect that $f$ have weak normal curvature \emph{equal} to $\cC_N(X)$ for most of these notions. We hope to investigate this, along with other properties of critical points elsewhere. 

\subsection{A differentiable sphere theorem for immersions with small normal curvatures}

The technique used to prove Proposition \ref{prop:secPetrunin} can be generalized to other curvature conditions. For example, by considering isotropic curvature, we obtain
\begin{theo}\label{theo:norm-curv-sphere-thm}
For $X^n$ closed, if $f : X \to \BB^N$ has $\fc(f) < \sqrt\frac 43$ then $X$ is diffeomorphic to a standard $n$-sphere. 
\end{theo}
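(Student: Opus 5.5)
We follow the strategy behind Proposition~\ref{prop:secPetrunin}, with isotropic curvature replacing sectional curvature. First we produce a conformal metric $\tilde g$ on $X$ with strictly positive isotropic curvature (PIC). Then we upgrade to the stronger curvature condition needed for the Brendle--Schoen differentiable sphere theorem, namely PIC of $(X\times\RR^2,\tilde g+g_{\RR^2})$. The Ricci flow then deforms $\tilde g$ to a metric of constant positive curvature, so $X$ is diffeomorphic to a spherical space form. Finally we rule out nontrivial quotients.

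\textbf{Step 1 (Gauss equation and the conformal change).} Fix $p\in X$ and an orthonormal frame. The Gauss equation gives
\[
R(x,y,y,x)=\langle \bA(x,x),\bA(y,y)\rangle-|\bA(x,y)|^2 .
\]
As in Petrunin's argument, we use the conformal factor built from $r=|f|$. Set $\tilde g = e^{2\phi}g$ with $\phi$ an explicit function of $|f|^2$; one natural choice is the pullback of a hyperbolic-type metric on $\BB^N$, which is presumably the choice in Proposition~\ref{prop:secPetrunin}. Since $\BB^N$ is conformally flat, the curvature of $\tilde g$ is the Gauss term minus the Kulkarni--Nomizu correction $(\nabla^2\phi-d\phi\otimes d\phi+\tfrac12|d\phi|^2g)\KN g$. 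The Hessian of $|f|^2$ along $X$ contributes $2g+2\langle \bA,f\rangle$, and the constraint $|f|\le1$ enters only through this term.

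\textbf{Step 2 (pointwise algebra).} For PIC we need, for every orthonormal four-frame $e_1,\dots,e_4$,
\[
R_{1331}+R_{1441}+R_{2332}+R_{2442}-2R_{1234}>0 .
\]
For the $\RR^2$-product version, the standard reformulation is that for every unit $\zeta_1,\zeta_2\in T_p^\CC X$ spanning a totally isotropic-type configuration, $R(\zeta_1,\zeta_2,\bar\zeta_2,\bar\zeta_1)>0$; equivalently, the complex sectional-type inequality with $\lambda,\mu\in[0,1]$. We express the Gauss part via $\bA$ extended complex-bilinearly:
\[
\langle\bA(\zeta_1,\bar\zeta_1),\bA(\zeta_2,\bar\zeta_2)\rangle-|\bA(\zeta_1,\zeta_2)|^2 .
\]
Bounding $|\bA(u,u)|\le\fc<\sqrt{4/3}$ then requires an averaging (polarization/design) argument. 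It controls cross terms like $|\bA(u,v)|^2$ via integrals of $|\bA(w,w)|^2$ over unit $w$ in the relevant plane, analogous to how $\sqrt{3n/(n+2)}$ arises for scalar curvature and $\sqrt{3/2}$ for sectional curvature. The conformal correction supplies a positive term of size roughly $(1-\langle \bA,f\rangle$-type quantities$)$ that compensates. We expect the threshold computation to give exactly $4/3$, matching $\KK P^2$ (the Veronese examples are the extremal case, having nonpositive-PIC directions).

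\textbf{Step 3 (conclusion).} Given $\tilde g$ with the product-PIC condition, Brendle--Schoen shows that normalized Ricci flow converges to a constant curvature metric. Hence $X\cong S^n/\Gamma$. To show $\Gamma$ is trivial, we argue that $\fc<\sqrt{4/3}<\sqrt{3/2}$ lets us run the same construction on any cover. Alternatively, we compare with the $\RR P^n$ Veronese bound: Petrunin-style arguments, or the diameter estimate, combined with simple connectivity coming from the immersion into the ball, should rule out the quotient. If this fails, we fall back on an $\tilde g$-injectivity radius argument.

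\textbf{Main obstacle.} The main obstacle is Step 2: finding the optimal conformal factor and the averaging identity that yields the product-PIC inequality exactly when $\fc^2<4/3$. Handling the off-diagonal term $R_{1234}$ via $\bA$ will likely need a clever choice of complex test vectors. We would first try averaging $|\bA(w,w)|^2$ over $w$ in the complex circle $\cos\theta\,\zeta_1+\sin\theta\,\zeta_2$. Step 3's treatment of $\Gamma$ is the second delicate point.
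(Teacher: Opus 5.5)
Your architecture (conformal change by a function of $|f|$, strict PIC-2 for $\tilde g$, Brendle--Schoen, then excluding quotients) matches the paper's. However, Step~2 is the entire content of Proposition~\ref{prop:PIC2-petrunin}, and you only state an expectation, via a mechanism that cannot close it.

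\textbf{Step 2.} Note first that PIC cannot be \emph{upgraded}: $S^{n-1}\times S^1$ carries PIC metrics, so the $\lambda,\mu$-inequality \eqref{eq:tilde-PIC2} must be proved directly. The conformal factor must also be bounded. A hyperbolic-type factor is singular on $\partial\BB^N$, which $f(X)$ may touch (e.g.\ the round sphere). The paper instead uses $\psi=-\frac c2|f|^2$. The cross terms, including $\Rm(e_1,e_2,e_3,e_4)$, are controlled not by averaging but by real test vectors such as $u_\pm=\sqrt{1+\lambda^2}\,e_1\pm\sqrt{1+\mu^2}\,e_3$ and $v_\pm=\mu\sqrt{1+\lambda^2}\,e_2\mp\lambda\sqrt{1+\mu^2}\,e_4$. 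Their sums $\bA(u_\pm,u_\pm)+\bA(v_\pm,v_\pm)=\bS\pm2\sqrt{(1+\lambda^2)(1+\mu^2)}\,\bY$ give Lemma~\ref{lemm:PIC2-helper-lemma}, and then $c=4$ is forced by cancelling $\bangle{\bW,\bX}$.

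Most importantly, your claim that $|f|\le1$ enters only through the Hessian of $|f|^2$ is exactly where a pointwise argument fails. After Cauchy--Schwarz on $c\bangle{\bx^\perp,\bS}$ and the gradient terms, one is left with $e^{2\psi}\tilde{\cQ}_{\lambda,\mu}\ge(1+\lambda^2)(1+\mu^2)\bigl(8-3\fc(f)^2+12|\bx^\perp|^2-16|\bx|^2\bigr)$. Nothing local prevents $\bx$ from being nearly tangent to $f(X)$ with $|\bx|$ close to $1$, where this bound is about $-8-3\fc(f)^2$. The missing ingredient is the global angle estimate of Lemma~\ref{lemm:angle}, $|\bx^\perp|\ge1+\frac{\fc(f)}2(|\bx|^2-1)$. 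It is proved by a bow-lemma comparison of $f$-images of geodesics with circular arcs that would otherwise leave $\BB^N$. It gives $12|\bx^\perp|^2-16|\bx|^2\ge-4$, hence $e^{2\psi}\tilde{\cQ}_{\lambda,\mu}\ge(1+\lambda^2)(1+\mu^2)(4-3\fc(f)^2)>0$. That, not an averaging identity, is where $\frac43$ comes from.

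\textbf{Step 3.} An immersion into a ball imposes no restriction on $\pi_1$ by itself: every closed manifold has one with $\fc<\sqrt3$. So there is no simple connectivity coming from the immersion into the ball. Lifting to covers, the diameter bound, or an injectivity radius argument also do not show $\Gamma$ is trivial. The paper instead invokes Petrunin's theorem \cite{Petrunin:veronese} that $\fc(f)<\sqrt{4/3}$ already forces $X$ to be homeomorphic to $S^n$. This gives $\pi_1(X)=0$, so the spherical space form produced by Brendle--Schoen is $S^n$. It also settles $n\le3$ outright by uniqueness of smooth structures. Your plan does not address low dimensions, and this matters because Proposition~\ref{prop:PIC2-petrunin} uses orthonormal $4$-frames and the Brendle--Schoen classification is applied only for $n\ge4$.
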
  
This answers a question of Petrunin \cite[\S 5]{Petrunin:veronese}. The proof of Theorem \ref{theo:norm-curv-sphere-thm} relies on the deep result by Brendle--Schoen \cite{BrendleSchoen} that a closed, strictly PIC-2 manifold is diffeomorphic to a standard spherical space form.

\subsection{General Veronese tensor products} 

The map in \eqref{eq:SnS1map} is part of a more general family of maps formed by taking tensor products of ``Veronese maps'' from spheres (immersions of spheres formed by normalizing an orthonormal basis of an eigenspace of the Laplacian). By optimizing within this class we find other immersions with small normal curvature including: 
\begin{enumerate}
\item $\cC_N(S^2\times S^2) \leq \sqrt{\frac 53}$ for $N\geq 14$ (see Proposition \ref{prop:prod-two-spheres}). 
\item $\cC_N(S^2\times T^2) \leq \sqrt{\frac 95}$ for $N \geq 24$ (see Proposition \ref{prop:Sn-T2}). 
\end{enumerate}
Whether these maps achieve the minimal curvature is an intriguing question. We pose possible intrinsic curvature conditions that imply the minimality. See Section \ref{sec:conj} for further discussion.

\subsection{On the use of AI} We began working on this problem after an internal DeepMind AI model (cf.\ \cite{Alethea}) produced the map 
\begin{equation}\label{eq:SnS1map}
f : S^n \times S^1 \ni (x,t) \mapsto (r_1 \cos t \, x, r_1 \sin t \, x, r_2 \cos 2t, r_2 \sin 2t) \in \RR^{2n+4},
\end{equation}
where $r_1 = \sqrt\frac 23 , r_2=\sqrt\frac13$ and proved that  $\fc(f) = \sqrt\frac32$ (in response to a request for low curvature immersions from $S^2\times S^1$). We found this exciting and it inspired us to consider this problem. This also lead us to consider the general ansatz discussed in Section \ref{sec:examples}. While finishing a draft of this article, we discovered that Naitoh had previously found \cite{Naitoh} a similar (but not identical) immersion with the same bound as in \eqref{eq:SnS1map}. As such, we cannot give the model credit for discovering \eqref{eq:SnS1map}, but do note that we would have been extremely unlikely to find the reference \cite{Naitoh} (or even to begin working on this problem) without the model's assistance.

We also found AI assistance useful in other ways. The DeepMind model found the curvature condition in Conjecture \ref{conj:imo-ric} in response to a request to ``reverse engineer'' a curvature condition based on considerations of the model case.  We also made use of publicly available LLMs (Gemini 2.5 and 3 Deep Think as well as ChatGPT 5.2 Pro). These models provided rigorous and semi-rigorous reasoning and numerical code generation that helped us find the maps in Section \ref{subsec:measures} (although this took significant iteration). These models were also able to help with routine but tedious proofs; namely, they helped us find proofs of optimizations involving second fundamental form bounds (cf.\ Lemma \ref{lemm:PIC2-helper-lemma}) and found the correct generalization Proposition \ref{prop:PIC2-petrunin} from PIC to PIC-2 by inserting ``$\lambda$'' and ``$\mu$'' as appropriate.

This article does not contain AI generated text.

\subsection{Acknowledgements}
O.C. was partially supported by a Terman Fellowship and an NSF grant (DMS-2304432). He thanks Google DeepMind for generously providing access to their models and particularly Tony Feng for facilitating this access as well for several interesting discussions.  C.L. was partially supported by an NSF grant (DMS-2202343) and a Sloan Fellowship.

\section{Preliminaries} 

\subsection{The Gauss equation} We take the curvature convention so that if $X \to \RR^N$ is an immersion with second fundamental form $\bA(\cdot,\cdot)$, the induced metric on $X$ has Riemann curvature tensor satisfying 
\begin{equation}\label{eq:Gauss}
\Rm(e_i,e_j,e_k,e_l) = \bangle{\bA(e_i,e_l),\bA(e_j,e_k)} - \bangle{\bA(e_i,e_k),\bA(e_j,e_l)}  
\end{equation}
for orthonormal tangent vectors $\{e_i, e_j, e_k, e_l\}$. In particular, in this convention, $\Rm(e_i,e_j,e_j,e_i)$ is a sectional curvature of $X$. Taking trace, we have that the Ricci curvature of the induced metric on $X$ satisfies
\begin{equation}\label{eq:tracedGauss}
\Ric(e_i,e_i) = \bangle{\bA(e_i,e_i),\bH} - |\bA(e_i,\cdot)|^2. 
\end{equation}
It is convenient to also recall Petrunin's version \cite[2.1]{Petrunin:tori} of the traced Gauss equations for an immersion $X^n\to\RR^N$
\begin{equation}\label{eq:PetruninGauss}
R = \frac 32 |\bH|^2 - \frac{n(n+2)}{2} \fint_{u \in S_pX} |\bA(u,u)|^2 
\end{equation}
where $R$ is the scalar curvature, $\bH = \tr\bA$ is the mean curvature, and $S_pX$ is the unit sphere in $T_pX$ with respect to the pullback metric.  

\subsection{Conformal change of curvature} We recall also some standard formulae for the behavior of curvature under a conformal change. For $(X^n,g)$ a Riemannian manifold and $\tilde g= e^{2\psi}g$ for $\psi \in C^\infty(X)$. Then the Riemann and Ricci curvature tensors with respect to $\tilde g$ satisfy
\begin{align}
\widetilde{\Rm} & = e^{2\psi} \left( \Rm - (\nabla^2 \psi)   \KN g + (d\psi\otimes d\psi) \KN g - \frac 12 |\nabla \psi|^2 g\KN g\right) \label{eq:confRm}\\
\widetilde{\Ric} & = \Ric - (n-2) (\nabla^2 \psi) + (n-2)(d\psi\otimes d\psi) - (\Delta \psi + (n-2) |\nabla \psi|^2)g,\label{eq:confRic}
\end{align}
where $\KN$ is the Kulkarni–Nomizu product defined by
\[
(h \KN k)(w,x,y,z) = h(w,z) k(x,y) + h(x,y)k(w,z) - h(w,y)k(x,z) - h(x,z) k(w,y). 
\]
See, e.g. \cite[Theorem 7.30]{Lee:Riemannian}.

\section{Immersions with small normal curvature}\label{sec:examples}

In this section we use tensor products of higher Veronese immersions to construct low curvature immersions of products of spheres. We note that the construction here is closely related to the tensor product construction of submanifolds of finite type as considered by B.-Y.\ Chen in \cite{Chen:book,Chen:tensor}, although it seems like this has not been considered in the context of \eqref{eq:grom-invar}. 

\subsection{Higher Veronese immersions} \label{subsec:Veronese}

We begin by recalling the notion of higher Veronese immersions (often called the \emph{standard minimal immersion} $S^n\to S^m$). The calculations here are well-known, cf.\ \cite{Takahashi:MinimalImmersion,DoCarmoWallach,Tsukada} but we include them for completeness. 

Recall that the spectrum of the Laplacian on the round sphere $(\SS^n,g_{\SS^n})$ is $\ell(\ell+n-1)$, $\ell=0,1,2\dots$ and the corresponding eigenspaces $E(n,\ell)$ have dimension $D(n,\ell) : = \binom{n+\ell-1}{n-1} + \binom{n+\ell-2}{n-1}$. Fix $n,\ell$, an $L^2$-orthonormal eigenbasis $f_1,\dots,f_D$ and then define $\varphi = \varphi_{n,\ell} : S^{n} \to \RR^{D(n,\ell)}$ by $\varphi = (|\SS^n|/D)^{\frac 12} (f_1,\dots,f_D)$. Since an isometry of $\SS^n$ induces a linear isometry of $E(n,\ell)$ it's clear that $|\varphi|^2 = C$ is constant. The given normalization ensures that $C=1$. Indeed, we may compute
\[
C |\SS^n| = \int_{\SS^n} |\varphi|^2 = |\SS^n| D^{-1} \sum_{i=1}^D \int_{\SS^n} |f_i|^2 = |\SS^n|. 
\]
Similar reasoning implies that $\varphi^*g_{\RR^D}$ is a round metric. Write
\[
\varphi^*g_{\RR^D} = D^{-1}|\SS^n|\sum_{i=1}^D df_i \otimes df_i = \rho \, g_{\SS^n}
\]
for $\rho=\rho(n,\ell) > 0$. Then 
\begin{align*}
n \rho D = \sum_{i=1}^D \int_{\SS^n} |df_i|^2 = \ell(\ell+n-1) \sum_{i=1}^D \int_{\SS^n} |f_i|^2 =  \ell(\ell+n-1) D
\end{align*}
so we find that
\begin{equation}\label{eq:rho-val-gen-ver}
\rho(n,\ell) = \frac{\ell(\ell+n-1)}{n} . 
\end{equation}
We now turn to the extrinsic curvature of $\varphi$. Since the coefficients of $\varphi$ are formed from an eigenbasis, we have that $\varphi$ is a minimal immersion into $\SS^{D-1}$. Thus, its Euclidean mean curvature satisfies $|\bH| =  n$.  As above, symmetry implies that the immersion is \emph{isotropic} in the sense that $|\bA(u,u)|$ is constant for all unit tangent vectors $u$. We define $\lambda=\lambda(n,\ell)$ by $|\bA(u,u)|^2 = \lambda |u|_{g_{\SS^n}}^4$ (note that the right hand side is \emph{not} the pullback metric!). Note that \eqref{eq:rho-val-gen-ver} gives that the pullback metric $\rho\, g_{\SS^n}$ has scalar curvature $\frac{n(n-1)}{\rho}$. Using these facts in \eqref{eq:PetruninGauss} (taking care to use the geometric scaling for the norm of $u$) we find
\begin{equation}\label{eq:lambda-rho-val-gen-ver}
\lambda(n,\ell) = \frac{3n} {n+2} \rho(n,\ell)^2 -  \frac{2(n-1)} {(n+2)}\rho(n,\ell). 
\end{equation}

\subsection{Tensor products of Veronese immersions} We now fix $M \in \NN$ and $\vec{n} \in \NN^M$. We set $X = \prod_{m=1}^M S^{n_m}$. For $\vec{\ell} \in \NN_0^M$ we set $L = L(\vec{n},\vec{\ell}) = \prod_{m=1}^M D(n_m,\ell_m)$ and then define the tensor product maps
\[
\varphi_{\vec{\ell}} : X \to \RR^L, \qquad (x_1,\dots,x_M) \mapsto \otimes_{m=1}^M \varphi_{n_m,\ell_m}(x_m). 
\]
Note that if $\ell_m = 0$ our convention is that $\varphi_{n_m,0} = 1 : S^{n_m} \to \RR$ is a constant map. In particular, $\varphi_{\vec{\ell}}$ will not be an immersion in this case. 

Using 
\[
d\varphi_{\vec\ell} = \sum_{m=1}^M \varphi_{n_1,\ell_1} \otimes \cdots \otimes d\varphi_{n_m,\ell_m} \otimes \cdots \otimes \varphi_{n_M,\ell_M}
\]
along with $d\varphi_{n,\ell}\perp \varphi_{n,\ell}=0$ and $|\varphi_{n,\ell}|=1$ we find
\[
(\varphi_{\vec\ell})^*g_{\RR^L}  = (\rho(n_1,\ell_1)g_{\SS^{n_1}},\dots,\rho(n_M,\ell_M)g_{\SS^{n_M}}).
\]

We now consider a geodesic $\gamma(t) = (\gamma_1(t),\dots,\gamma_M(t))$ with respect to the product metric $(g_{\SS^{n_1}},\dots,g_{\SS^{n_M}})$. Let $(x_1,\dots,x_M) = \gamma(0)$ and $(u_1,\dots,u_M) = \gamma'(0)$. Note that we are \emph{not} assuming that $\gamma$ has unit speed. We compute
\begin{align*}
& \frac{d^2}{dt^2} \Big|_{t=0} \varphi_{\vec{\ell}}\, (\gamma(t)) \\
& = \sum_{m=1}^M \varphi_{n_1,\ell_1}(x_1) \otimes \dots\otimes \bA_{\varphi_{n_m,\ell_m}}(u_m,u_m) \otimes \dots\otimes \varphi_{n_M,\ell_M}(x_M)\\
& + 2 \sum_{a<b} \varphi_{n_1,\ell_1}(x_1)\otimes \dots \otimes d\varphi_{n_a,\ell_a}(u_a) \otimes \dots\otimes  d\varphi_{n_b,\ell_b}(u_b) \otimes \dots \otimes \varphi_{n_M,\ell_M}(x_M). 
\end{align*}
Observe that the inner product of the two sums vanishes, but distinct terms in the first sum are not pairwise orthogonal since for $a\neq b$ we have (writing $\varphi_a$ for $\varphi_{n_a,\ell_a}$ and so on)
\[
\bangle{ \cdots\otimes \bA_{\varphi_{a}}(u_a,u_a) \otimes \cdots,\cdots\otimes \bA_{\varphi_{b}}(u_b,u_b) \otimes \cdots } = \rho(n_a,\ell_a) \rho(n_b,\ell_b) |u_a|^2|u_b|^2
\]
where we used $\bangle{\bA_{\varphi_a}(u_a,u_a),\varphi_a} = -|d\varphi_a(u_a)|^2 = - \rho(n_a,\ell_a)|u_a|^2$. Again, we emphasize that here we have written $|\cdot|$ for the norm with respect to $g_{\SS^n}$, not the pullback metric. We use this convention below (note that we also write the norm of a vector in $\RR^L$, where we unambiguously use the Euclidean inner product). Putting these together, we find
\[
\left| \frac{d^2}{dt^2} \Big|_{t=0} \varphi_{\vec{\ell}}\, (\gamma(t)) \right|^2 = \sum_{m=1}^M \lambda(n_m,\ell_m)|u_m|^4 + 6 \sum_{a<b} \rho(n_a,\ell_a)\rho(n_b,\ell_b) |u_a|^2 |u_b|^2.
\]

Now, we fix $\alpha : \NN_0^M \to [0,\infty)$ with compact support and 
\[
\sum_{\vec{\ell} \in\NN_0^M} \alpha(\vec \ell) = 1.
\]
It will be convenient to identify $\alpha$ with the associated compactly supported probability measure $\sum_{\vec{\ell} \in\NN_0^M} \alpha(\vec\ell) \delta_{\vec{\ell}}$ on $\NN_0^M$. For any such measure, $\alpha(\vec{\ell})$, we define
\begin{equation}\label{eq:defn-F-tensor-sum-veronese}
F : X \to \RR^N, \qquad F(x) = \bigoplus_{\vec{\ell} \in \supp \alpha} \alpha(\vec{\ell})^{\frac 12} \varphi_{\vec{\ell}}\, (x),
\end{equation}
for $N = \sum_{\vec{\ell} \in \supp \alpha} L(\vec{n},\vec{\ell})$. Note that $|F(x)| = 1$ for any $x \in X$, so the image of $F$ lies in $\BB^N$ (actually $\partial \BB^N$). Moreover,
\[
|u|_{F^*g_{\RR^N}}^2 = \sum_{m=1}^M \left( \sum_{\vec{\ell} \in \NN_0^M} \alpha(\vec{\ell}) \rho(n_m,\ell_m) \right)|u_m|^2 = \sum_{m=1}^M \EE[\rho_m] |u_m|^2
\]
where $\rho_m : \NN_0^M \to [0,\infty)$ is $\rho_m(\vec{\ell}) = \rho(n_m,\ell_m)$ and $\EE[\cdot]$ is the expectation with respect to the probability measure $\mu$. In particular we find that $F$ is an immersion if and only if $\EE[\rho_m] > 0$ for $m=1,\dots,M$. Similarly we may compute 
\begin{align*}
|\bA_F(u,u)|^2 = \sum_{m=1}^M \EE[\lambda_m]|u_m|^4 + 6 \sum_{a<b} \EE[\rho_a\rho_b]\, |u_a|^2|u_b|^2
\end{align*} 
where $\lambda_m(\ell) = \lambda(n_m,\ell)$, and by definition,
\[\EE [\lambda_m] = \sum_{\vec\ell \in \NN_0^M} \alpha(\vec\ell) \lambda(n_m, l_m), \quad \EE [\rho_a\rho_b] = \sum_{\vec{\ell} \in \NN_0^M} \alpha(\ell) \rho(n_a, \ell_a) \rho(n_b, \ell_b).\]

Let $A$ be the symmetric $M\times M$ matrix with diagonal coefficients $A_{mm} = \EE[\lambda_m]$ and off diagonal coefficients $A_{ab} =3\EE[\rho_a\rho_b]$. Define $G \in \RR^M$ by $G_m = \EE[\rho_m]$. Finally, define $U \in \RR^M$ by $U_m = |u_m|^2$. Note that $U$ lies in the first quadrant: $U \in \cU := \{U_m \geq 0\}\setminus\{0\}$. The previous analysis thus gives that the normal curvature of $F$ satisfies
\[
\fc(F)^2 = \max_{U \in \cU} \frac{U^T AU}{U^T GG^TU}. 
\]
In particular, we have proven:
\begin{prop}\label{prop:Bs-copos-reformulation}
Fix a probability measure $\mu = \sum_{\vec{\ell} \in\NN_0^M}\alpha(\vec{\ell})\delta_{\vec{\ell}}$ with compact support. Assume that $\EE[\rho_m] > 0$ for $m=1,\dots,M$. Then the map $F : X \to \RR^N$ defined in \eqref{eq:defn-F-tensor-sum-veronese} is an immersion into $\BB^N$. Let $s \geq 0$ be minimal so that 
\[
B(s) := sGG^T - A
\] 
is copositive. Then $\fc(F) = \sqrt{s}$. 
\end{prop}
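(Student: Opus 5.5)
Most of the work is already done in the computation preceding the statement. What remains is to organize it and translate the formula for $\fc(F)^2$ into the copositivity language. The plan has three steps.

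\textbf{Step 1 (immersion into $\BB^N$).} Since each $\varphi_{n,\ell}$ has $|\varphi_{n,\ell}|=1$, tensor products of unit vectors are unit vectors, so $|\varphi_{\vec\ell}|=1$. Hence
\[
|F|^2=\sum_{\vec\ell}\alpha(\vec\ell)=1,
\]
and the image lies in $\partial\BB^N\subset\BB^N$. The pullback metric computed above is $\sum_m \EE[\rho_m]|u_m|^2$, which is positive definite exactly when every $\EE[\rho_m]>0$. So $F$ is an immersion.

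\textbf{Step 2 (reduce $\fc(F)$ to a quotient over $\cU$).} We need $|\bA_F(u,u)|$ for unit $u$ in the pullback metric. I will check that the second derivative of $F$ along a product-geodesic $\gamma$ has the same norm as $\bA_F(u,u)$. Differentiating $|F|^2=1$ twice gives
\[
\bangle{(F\circ\gamma)'',F}=-|dF(u)|^2,
\]
and this is not zero. The cleaner route is that $(F\circ\gamma)''$ equals $\bA_F(u,u)$ plus the tangential term $dF(\nabla^{g_F}_{\gamma'}\gamma')$. Here $\nabla^{g_F}$ is the Levi-Civita connection of the pullback metric $g_F$ of $F$. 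This metric is a product of round metrics with constant scale factors $\EE[\rho_m]$, so its Levi-Civita connection coincides with that of the product round metric. Therefore $\gamma$ is also a $g_F$-geodesic, the tangential term vanishes, and $(F\circ\gamma)''=\bA_F(u,u)$ with $\bA_F$ the full Euclidean second fundamental form.

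Each block $\alpha(\vec\ell)^{1/2}\varphi_{\vec\ell}$ sits in an orthogonal summand, so the squared norms add. Combined with the displayed formula for $|\tfrac{d^2}{dt^2}\varphi_{\vec\ell}(\gamma)|^2$, this gives $|\bA_F(u,u)|^2=U^TAU$, where $U_m=|u_m|^2$ as defined in the text. Likewise $|u|^2_{g_F}=G^TU$.

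By homogeneity (both sides scale as degree 4 in $u$),
\[
\fc(F)^2=\sup_{u\neq0}\frac{U^TAU}{(G^TU)^2}.
\]
As $u$ ranges over nonzero tangent vectors, $U$ ranges over all of $\cU$: each $|u_m|^2$ can be any nonnegative number, not all zero. Note that $G^TU>0$ on $\cU$ because $G$ has positive entries.

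\textbf{Step 3 (copositivity).} By definition, $B(s)$ is copositive iff $U^T(sGG^T-A)U\ge0$ for all $U\ge0$. This is equivalent to $s\ge U^TAU/(G^TU)^2$ for all $U\in\cU$. The set of such $s$ is therefore $[\fc(F)^2,\infty)\cap[0,\infty)$. The supremum is attained because the quotient is continuous on the compact slice $\{U\ge0,\ \sum U_m=1\}$, so this set is closed and its minimum exists. Since $A$ has nonnegative entries (the $\lambda$'s are positive when $\ell\ge1$ and zero when $\ell=0$), the minimum is $\ge0$. Hence the minimal $s$ equals $\fc(F)^2$, i.e.\ $\fc(F)=\sqrt s$.

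The only subtle point is Step 2: verifying that the product geodesic has no tangential acceleration in the pullback metric, so that the second derivative really is $\bA_F$. I would handle this by the connection-invariance argument above, noting that scaling each factor by a constant does not change Christoffel symbols.
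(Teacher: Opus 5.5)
Your proposal is correct and follows the paper's argument. The paper's proof is exactly the computation preceding the statement, which yields $\fc(F)^2=\max_{U\in\cU} \frac{U^TAU}{U^TGG^TU}$, and the copositivity reformulation follows immediately from that formula. Your explicit check that product geodesics are also geodesics of $F^*g_{\RR^N}$ (constant rescalings of the factors do not change the Levi-Civita connection), so that the second derivative along them really is $\bA_F(u,u)$, fills in a step the paper uses only implicitly.
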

Recall that a (symmetric) matrix $B$ is \emph{copositive} if $U^TBU \geq 0$ for all $U \in \cU$. 

\begin{rema}
Note that $F$ is \emph{isotropic} (in the sense that $|\bA(u,u)|$ is constant for all unit tangent vectors $u$) if and only if $B(s) = 0$. This will be the case in the examples below. 
\end{rema}

\subsection{Low curvature examples} \label{subsec:measures} We consider small $M$ and obtain measures $\mu$ corresponding to low curvature maps (i.e.\ the parameter $s$ is small). The following is a direct generalization of \eqref{eq:SnS1map}. 
\begin{prop}\label{prop:prod-two-spheres}
Consider $n_1 \geq n_2$. We have $\cC_N(S^{n_1} \times S^{n_2}) \leq \sqrt{\frac{2n_2+1}{n_2+1}}$ for $N \geq (n_1+1)(n_2+1) + \frac{n_2(n_2+3)}{2}$. 
\end{prop}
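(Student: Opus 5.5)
The plan is to apply Proposition \ref{prop:Bs-copos-reformulation} with $M=2$, $\vec n=(n_1,n_2)$, and a two-point measure that generalizes \eqref{eq:SnS1map}. Note that \eqref{eq:SnS1map} is exactly $F$ for $\mu=\frac23\delta_{(1,1)}+\frac13\delta_{(0,2)}$ on $S^n\times S^1$, where $\varphi_{1,2}(t)=(\cos 2t,\sin 2t)$. So I take
\[
\mu=\alpha\,\delta_{(1,1)}+(1-\alpha)\,\delta_{(0,2)},\qquad \alpha\in(0,1),
\]
and tune $\alpha$ so that $B(s)=0$, i.e.\ $F$ is isotropic.

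\emph{Dimension count.} We have $D(n,1)=n+1$ and $D(n,2)=\binom{n+1}{n-1}+\binom{n}{n-1}=\frac{(n+1)n}{2}+n=\frac{n(n+3)}{2}$. Hence the target dimension is $(n_1+1)(n_2+1)+\frac{n_2(n_2+3)}{2}$, which matches the statement. Putting the degree-two factor on the \emph{smaller} sphere is what makes this dimension small; this is the only role of the hypothesis $n_1\ge n_2$. For larger $N$, compose with the inclusion $\BB^{N_0}\subset\BB^N$.

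\emph{Constants.} Write $n=n_2$. From \eqref{eq:rho-val-gen-ver} and \eqref{eq:lambda-rho-val-gen-ver}:
\[
\rho(\cdot,1)=1,\quad \lambda(\cdot,1)=1,\quad \rho(n,2)=\tfrac{2(n+1)}{n},\quad \lambda(n,2)=\tfrac{8(n+1)}{n}.
\]
Also $\rho(\cdot,0)=\lambda(\cdot,0)=0$. Therefore
\[
G=\Bigl(\alpha,\ \alpha+(1-\alpha)\tfrac{2(n+1)}{n}\Bigr),\qquad
A=\begin{pmatrix}\alpha & 3\alpha\\ 3\alpha & \alpha+(1-\alpha)\tfrac{8(n+1)}{n}\end{pmatrix}.
\]
Here $A_{12}=3\EE[\rho_1\rho_2]=3\alpha$, since only the atom $(1,1)$ contributes. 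Both entries of $G$ are positive, so $F$ is an immersion into $\BB^N$.

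\emph{Solving for isotropy.} We impose $A=sGG^T$.
\begin{itemize}
\item The $(1,1)$ entry gives $s=1/\alpha$.
\item The $(1,2)$ entry gives $G_2=3\alpha$, i.e.\ $(1-\alpha)\frac{n+1}{n}=\alpha$, so $\alpha=\frac{n+1}{2n+1}$.
\item The $(2,2)$ entry must then also hold: $A_{22}=\alpha+8\alpha=9\alpha=sG_2^2$. It does, by the same relation $(1-\alpha)\frac{n+1}{n}=\alpha$.
\end{itemize}
Thus $B(s)=0$ with $s=\frac{2n_2+1}{n_2+1}$. Any smaller $s$ makes $B(s)=(s-\frac1\alpha)GG^T$ negative on $G$'s direction, evaluated at $U=(1,0)$, so it fails copositivity. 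Proposition \ref{prop:Bs-copos-reformulation} then gives $\fc(F)=\sqrt{\frac{2n_2+1}{n_2+1}}$. As a check, $n_2=1$ recovers $\alpha=\frac23$ and $\sqrt{3/2}$.

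The only real obstacle is verifying that three equations in two unknowns are consistent, namely the coincidence in the $(2,2)$ entry. This rests on the identity $\lambda(n,2)=4\rho(n,2)$, which I would double-check carefully from \eqref{eq:lambda-rho-val-gen-ver}.
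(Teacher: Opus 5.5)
Your proposal is correct and matches the paper's proof: the same two-atom measure $\alpha(1,1)\delta_{(1,1)}+\alpha(0,2)\delta_{(0,2)}$, the same $G$ and $A$, and the same solution $\alpha(1,1)=\frac{n_2+1}{2n_2+1}$, $s=\frac{2n_2+1}{n_2+1}$ with $B(s)=0$, concluded via Proposition \ref{prop:Bs-copos-reformulation}. The identity you flag does hold: substituting $\rho=\frac{2(n+1)}{n}$ into \eqref{eq:lambda-rho-val-gen-ver} gives $\lambda(n,2)=\frac{4(n+1)(2n+4)}{n(n+2)}=\frac{8(n+1)}{n}=4\rho(n,2)$. (Also, $n_1\ge n_2$ does more than reduce the dimension: since $\frac{2n+1}{n+1}$ is increasing in $n$, putting the degree-two factor on the smaller sphere also gives the smaller constant.)
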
 
\begin{proof}
We consider
\[
\mu = \alpha(1,1) \, \delta_{(1,1)} + \alpha(0,2) \,  \delta_{(0,2)}. 
\]
Recall that $\rho(n,0) = 0, \rho(n,1) = 1,\rho(n,2) = \frac{2(n+1)}{n}$ and $\lambda(n,0) = 0, \lambda(n,1) = 1,\lambda(n,2) = \frac{8(n+1)}{n}$. Then $G = (\alpha(1,1),\alpha(1,1) + \frac{2(n_2+1)}{n_2} \alpha(0,2))^T$ and
\[
A = \left( \begin{matrix} \alpha(1,1) & 3 \alpha(1,1) \\ 3\alpha(1,1) & \alpha(1,1) + \frac{8(n_2+1)}{n_2} \alpha(0,2)  \end{matrix} \right)
\]
We seek $\alpha(1,1),\alpha(0,2),s$ so that $B(s) = 0$. This gives
\begin{align*}
s \alpha(1,1)^2 & = \alpha(1,1)\\
s\alpha(1,1) (\alpha(1,1) + \tfrac{2(n_2+1)}{n_2} \alpha(0,2)) & = 3 \alpha(1,1)\\
s(\alpha(1,1) + \tfrac{2(n_2+1)}{n_2} \alpha(0,2))^2 & = \alpha(1,1) + \tfrac{8(n_2+1)}{n_2} \alpha(0,2).
\end{align*}
We can solve this system to find $\alpha(1,1) = \frac{n_2+1}{2n_2+1}$, $\alpha(0,2) = \frac{n_2}{2n_2+1}$, and $s=\frac{2n_2+1}{n_2+1}$.

One may check that with these choices, $\EE[\rho_m] > 0$ for $m=1,2$ so the corresponding map is an immersion, and that $B(s) = 0$.  Thus, the claim follows from Proposition \ref{prop:Bs-copos-reformulation}. 
\end{proof}

\begin{prop}
For $n_1\geq n_2\geq n_3$ we have $\cC_N(S^{n_1}\times S^{n_2}\times S^{n_3}) \leq \sqrt{\frac{6n_3+5}{3n_3+3}}$ for $N\geq (n_1+1)(n_2+1) + (n_1+1)(n_3+1) + (n_2+1)(n_3+1) + \frac{n_3(n_3+3)}{2}$.  
\end{prop}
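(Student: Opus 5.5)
The plan is to apply Proposition \ref{prop:Bs-copos-reformulation} exactly as in the proof of Proposition \ref{prop:prod-two-spheres}, now with $M=3$. The dimension count in the statement points to the measure
\[
\mu = a\,\delta_{(1,1,0)} + b\,\delta_{(1,0,1)} + c\,\delta_{(0,1,1)} + d\,\delta_{(0,0,2)}.
\]
Its ambient dimension is $N=(n_1+1)(n_2+1)+(n_1+1)(n_3+1)+(n_2+1)(n_3+1)+D(n_3,2)$, and $D(n_3,2)=\frac{n_3(n_3+3)}{2}$ is the dimension of the space of harmonic quadratics on $\RR^{n_3+1}$. The degree-two Veronese factor is placed on the smallest sphere $S^{n_3}$ because the resulting value of $s$ turns out to increase with the dimension of that factor.

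Write $r=\rho(n_3,2)=\frac{2(n_3+1)}{n_3}$, and note that $\lambda(n_3,2)=\frac{8(n_3+1)}{n_3}=4r$. Using $\rho(\cdot,1)=\lambda(\cdot,1)=1$ and $\rho(\cdot,0)=\lambda(\cdot,0)=0$, the data of Proposition \ref{prop:Bs-copos-reformulation} are
\[
G=(a+b,\; a+c,\; b+c+rd)^T,
\]
and $A$ has diagonal entries $a+b$, $a+c$, $b+c+4rd$ and off-diagonal entries $A_{12}=3a$, $A_{13}=3b$, $A_{23}=3c$.

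As in the two-sphere case, I will impose $B(s)=sGG^T-A=0$, which gives six equations. The order of solving is as follows.
\begin{enumerate}
\item The $(1,1)$ and $(2,2)$ entries give $s(a+b)=1=s(a+c)$, hence $b=c$.
\item The $(1,2)$ entry then reads $1/s=3a$, so $a=\frac1{3s}$ and $b=c=\frac{2}{3s}$.
\item The $(1,3)$ entry gives $b+c+rd=3b$, i.e.\ $d=b/r$. The $(2,3)$ entry gives the same relation.
\item The $(3,3)$ entry becomes $9sb^2=6b$, which is consistent with $b=\frac{2}{3s}$.
\end{enumerate}
It remains to impose that $\mu$ is a probability measure: $a+2b+d=1$ yields
\[
s=\frac53+\frac{2}{3r}=\frac{6n_3+5}{3n_3+3}.
\]

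Finally, I will check that the resulting weights $a,b,c,d$ are positive, so that $\mu$ is a genuine probability measure. Each $\EE[\rho_m]=G_m=1/s$ is then positive, so $F$ is an immersion into $\BB^N$. Since $B(s)=0$ is trivially copositive and no smaller $s$ works, Proposition \ref{prop:Bs-copos-reformulation} gives $\fc(F)=\sqrt{s}$, which proves the bound. The only real obstacle is guessing the ansatz for $\mu$, and the dimension count determines it. After that, the point to verify is that the overdetermined system of six equations in five unknowns is consistent, which steps 3 and 4 confirm.
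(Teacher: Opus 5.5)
Your proposal is correct and is essentially the paper's proof: solving $B(s)=0$ as you do gives $a=\frac{n_3+1}{6n_3+5}$, $b=c=\frac{2(n_3+1)}{6n_3+5}$, $d=\frac{n_3}{6n_3+5}$, which is exactly the measure the paper exhibits (found there by ``trial and error'') before invoking Proposition \ref{prop:Bs-copos-reformulation}. One small slip: $G_3=b+c+rd=3b=\frac{2}{s}$, not $\frac{1}{s}$, but it is still positive, so the immersion claim is unaffected.
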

\begin{proof}
Trial and error yield the following generalization of the measure from Proposition \ref{prop:prod-two-spheres}
\[
\mu = \frac{n_3+1}{6n_3+5} \delta_{(1,1,0)} + \frac{2(n_3+1)}{6n_3+5} (\delta_{(0,1,1)} + \delta_{(1,0,1)}) + \frac{n_3}{6n_3+5} \delta_{(0,0,2)}.
\]
One may check that $B(s) = 0$ with $s=\frac{6n_3+5}{3n_3+3}$ and conclude using Proposition \ref{prop:Bs-copos-reformulation} as before. 
\end{proof}
We are able to improve the previous estimate when $n_2=n_3 = 1$ as follows:
\begin{prop}\label{prop:Sn-T2}
For $n\geq 1$ we have $\cC_N(S^n \times T^2) \leq \sqrt{\frac95}$ for $N \geq 4n+16$. 
\end{prop}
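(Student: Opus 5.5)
The plan is to apply Proposition \ref{prop:Bs-copos-reformulation} with $M=3$, $\vec n=(n,1,1)$ and $X=S^n\times S^1\times S^1=S^n\times T^2$. I want a finitely supported probability measure $\mu$ on $\NN_0^3$ for which $B(s)=0$ with $s=\frac95$. Then $F$ is isotropic with $\fc(F)=\sqrt{9/5}$, and it only remains to check that the target dimension $N=\sum_{\vec\ell\in\supp\mu}L(\vec n,\vec\ell)$ is at most $4n+16$.

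The numerical inputs come from Section \ref{subsec:Veronese}. For the circle, $\rho(1,\ell)=\ell^2$, and \eqref{eq:lambda-rho-val-gen-ver} gives $\lambda(1,\ell)=\rho^2=\ell^4$ (these are just $t\mapsto(\cos\ell t,\sin\ell t)$), with $D(1,\ell)=2$ for $\ell\ge1$. For $S^n$ I will only use $\ell_1\in\{0,1\}$, where $\rho(n,1)=\lambda(n,1)=1$ and $D(n,1)=n+1$.

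I will impose the symmetry $\ell_2\leftrightarrow\ell_3$ on $\mu$. Then $B(s)=0$ reduces to four scalar equations, for the entries $(1,1)$, $(1,2)$, $(2,2)$ and $(2,3)$, together with the mass condition $\sum\alpha=1$. These are linear in the weights once $s$ is fixed. For example:
\begin{itemize}
\item the $(1,1)$ entry reads $s\,\EE[\rho_1]^2=\EE[\rho_1]$, which fixes $\EE[\rho_1]=1/s=\frac59$;
\item the $(1,2)$ entry reads $s\,\EE[\rho_1]\EE[\rho_2]=3\EE[\rho_1\rho_2]$, which ties the $S^n$-carrying atoms to the circle factors.
\end{itemize}

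The dimension budget $4n+16$ strongly constrains the support. Only terms with $\ell_1=1$ cost a multiple of $n+1$: $(1,1,0)$ and $(1,0,1)$ cost $2(n+1)$ each, and $(1,1,1)$ costs $4(n+1)$. This leaves $12$ dimensions for terms with $\ell_1=0$, which cost $2$ (one circle index nonzero) or $4$ (both nonzero). I will enumerate the candidate supports within this budget, for instance:
\begin{itemize}
\item $\{(1,1,1),(0,2,0),(0,0,2),(0,2,2),(0,1,1)\}$;
\item $\{(1,1,0),(1,0,1)\}$ together with three or four of $(0,1,1)$, $(0,2,2)$, $(0,1,2)$, $(0,2,1)$, $(0,k,0)$, $(0,0,k)$, possibly with higher frequencies $k$.
\end{itemize}
For each candidate I solve the resulting linear system, with $s$ either fixed at $\frac95$ or left as a variable, and look for a nonnegative solution. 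Proposition \ref{prop:prod-two-spheres} is a useful guide: there the optimal measure mixes the mixed tensor term $(1,1)$ with a pure second harmonic $(0,2)$.

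The main obstacle is this search. Several natural symmetric supports give overdetermined systems, where the equations force a negative weight or are inconsistent with the mass condition. So I expect to need either higher circle frequencies, such as $(0,3,0)$ or $(0,1,3)$, or asymmetric supports in which $(2,2)$ and $(3,3)$ are balanced separately. The freedom in choosing frequencies on $T^2$ is what should push $s$ below the three-sphere value $\frac{11}{6}$ coming from the previous proposition with $n_3=1$.

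Once a nonnegative solution is found, I check three things:
\begin{itemize}
\item $\EE[\rho_m]>0$ for $m=1,2,3$, so that $F$ is an immersion;
\item $B(\frac95)=0$ entrywise;
\item the dimension count equals $4n+16$.
\end{itemize}
The claim then follows from Proposition \ref{prop:Bs-copos-reformulation}, and for larger $N$ from the monotonicity of $\cC_N$ in $N$.
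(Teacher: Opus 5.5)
\textbf{There is a genuine gap.} Your framework is the paper's: Proposition~\ref{prop:Bs-copos-reformulation} with $\vec n=(n,1,1)$, a measure $\mu$ with $B(\frac95)=0$, and the dimension count. But the whole content of the proof is exhibiting that measure. You never produce it, and the search you set up provably cannot find one.

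\textbf{The missing rigidity constraint.} Since $\ell_1\in\{0,1\}$, you have $\rho_1=\ell_1^2$ and $\lambda_1=\ell_1^4$, so $A_{mm}=\EE[\ell_m^4]$ and $A_{ab}=3\EE[\ell_a^2\ell_b^2]$. Put $\hat\ell_m=\ell_m/\sqrt{G_m}$ and substitute $U_m=v_m^2/G_m$ into $U^TAU=\frac95(G^TU)^2$. Averaging over $v\in S^2$ gives $\frac15\EE[|\hat\ell|^4]=\frac95$, while $\EE[|\hat\ell|^2]=3$. Equality in Cauchy--Schwarz then forces $|\hat\ell|^2=3$ on $\supp\mu$; this is the equality case of Petrunin's bound on the totally geodesic $T^3$. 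With $G_1=\frac59$ and $G_2=G_3$, it says:
\begin{itemize}
\item atoms with $\ell_1=1$ lie on a circle $\ell_2^2+\ell_3^2=R$;
\item atoms with $\ell_1=0$ lie on $\ell_2^2+\ell_3^2=\frac52R$.
\end{itemize}
Consequently $(1,1,0),(1,0,1)$ would need level-$0$ atoms with $\ell_2^2+\ell_3^2=\frac52$, so your second family fails whatever you add to it. In the first family, $(1,1,1)$ forces radius $5$, which none of $(0,2,0),(0,0,2),(0,2,2),(0,1,1)$ has. Incidentally, the equations are not linear in the weights, since $sGG^T$ is quadratic in $\alpha$.

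\textbf{The symmetry $\ell_2\leftrightarrow\ell_3$ is itself fatal, so asymmetric supports are forced.}
\begin{itemize}
\item For $n\ge 5$ the budget $4n+16$ allows at most $4(n+1)$ dimensions on atoms with $\ell_1=1$. A swap-invariant choice there is therefore $\{(1,a,a)\}$, or $\{(1,2b,0),(1,0,2b)\}$ (evenness so that $\frac52R$ is an integer). This leaves $12$ dimensions.
\item The level-$0$ radius is $5a^2$, resp.\ $10b^2$, which is neither a square nor twice a square. So every level-$0$ atom has two distinct nonzero entries and costs $4$.
\item A swap-invariant level-$0$ choice is thus a single pair $(0,c,d),(0,d,c)$ of weight $\frac29$ each.
\item The $(2,3)$ entry of $B(\frac95)=0$ then reads $2c^2d^2=5a^4$, resp.\ $c^2d^2=15b^4$. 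Neither has integer solutions.
\end{itemize}
So you need three level-$0$ atoms on one lattice circle carrying two differently shaped lattice points. On the $(1,a,a)$ branch this first happens at $a=5$, where $125=2^2+11^2=5^2+10^2$.

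This is exactly the paper's measure
\[
\mu=\tfrac59\delta_{(1,5,5)}+\tfrac{200}{7371}\delta_{(0,2,11)}+\tfrac{719}{3024}\delta_{(0,5,10)}+\tfrac{3025}{16848}\delta_{(0,11,2)}.
\]
For it, $G=(\frac59,\frac{125}{3},\frac{125}{3})$, $A_{11}=\frac59$, $A_{12}=A_{13}=\frac{125}{3}$ and $A_{22}=A_{23}=A_{33}=3125$. Hence $B(\frac95)=0$, with $N=4(n+1)+12=4n+16$. The rest of your outline (isotropy, immersion check, monotonicity in $N$) is fine once such a $\mu$ is in hand.
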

\begin{proof}
Let
\[
\mu = \frac{5}{9} \delta_{(1,5,5)} + \frac{200}{7371} \delta_{(0,2,11)} + \frac{719}{3024} \delta_{(0,5,10)} + \frac{3025}{16848}\delta_{(0,11,2)}. 
\]
One can check that $\mu$ is a probability measure. Recalling that $\rho(1,\ell)=\ell^2,\lambda(1,\ell)=\ell^4$ and $\rho(n,1) = \lambda(n,1) = 1$, we compute
\begin{align*}
G_1 & = \frac 5 9\\
G_2 = G_3 & = \frac{125}{3}\\
A_{11} & = \frac 59\\
A_{12} = A_{13} & = \frac{125}{3} \\
A_{22} & = 3125\\
A_{23} & = 3125\\
A_{33} & = 3125. 
\end{align*}
From this, we find that $B(\frac 95) = 0$. Using Proposition \ref{prop:Bs-copos-reformulation}, this completes the proof. 
\end{proof}

Observe that any choice of $\mu$ induces a product metric on $S^n\times T^2$ and thus contains a totally geodesic $T^3$. Since $\cC_N(T^3) \geq \sqrt{\frac9 5}$, this is the optimal bound that can be obtained within this framework of tensors of higher Veronese immersions (but note that this does not prove optimality among all immersions). The measure in Proposition \ref{prop:Sn-T2} was found by a combination of numerical search and repeated LLM queries.

\begin{rema}
We may recover Gromov's bound $\cC_N(T^n) \leq \sqrt{\frac{3n}{n+2}}$ for $N \gg n$ using this framework. Note that $\rho(1,\ell) = \ell^2$ and $\lambda(1,\ell) = \ell^4$. Fix a rational spherical $4$-design $D = \{\hat u_1,\dots,\hat u_K\} \subset \SS^{n-1}$ (cf.\ \cite[Section 13]{Gromov:notes}). This means that $\hat u_j$ is a rational point on $S^{n-1}$ so that
\[
\frac 1 K \sum_{k=1}^K p(\hat u_k) = \fint_{\SS^{n-1}} p(x) \, dx
\]
for any polynomial $p$ on $\RR^{n}$ of degree $\leq 4$. In particular, we get
\begin{align*}
\frac{1}{K} \sum_{k=1}^K \bangle{\hat u_k,e_m}^2 & = \frac 1 n \\ 
\frac{1}{K} \sum_{k=1}^K \bangle{\hat u_k,e_m}^4 & = \frac{3}{n(n+2)} \\ 
\frac{1}{K} \sum_{k=1}^K \bangle{\hat u_k,e_a}^2\bangle{\hat u_k,e_b}^2 & = \frac{1}{n(n+2)}, 
\end{align*}
with $a\neq b$. Since $\hat u_k \in \QQ^{n}$ we can find $Q \in \NN$ with $u_k : = Q \hat u_k \in \ZZ^n$. Then set
\[
\mu = \frac 1 K \sum_{k=1}^K \delta_{u_k}. 
\]
We have
\begin{align*}
\EE[\ell_m^2] & = \frac{Q^2}{n}\\
\EE[\ell_m^4] & = \frac{3Q^4}{n(n+2)}
\end{align*}
so $G_m = \frac {Q^2}{n}$ and $A$ has on-diagonal elements $\EE[\ell_m^4] = \frac{3Q^4}{n(n+2)}$ and off-diagonal $3\EE[\ell_a^2\ell_b^2] = \frac{3Q^4}{n(n+2)}$. As such, we see that $B(\frac{3n}{n+2}) = 0$ as claimed. 
\end{rema}

\section{Obstructions to low curvature maps}

\subsection{A generalization of Petrunin's angle estimate}
The following lemma is a generalization of \cite[Lemma 4.1]{Petrunin:tori}.  

\begin{lemm}\label{lemm:angle}
Suppose that for $X$ closed, $f : X \to \BB^N$ has $\fc(f) \leq c <  2$.  For $p \in X$ write $\bx = f(p)$ and $\bx^\perp$ for the projection of $\bx$ to the normal bundle to $X$ at $p$. Then $|\bx| > 0$ and $|\bx^\perp| \geq 1 + \frac c2 (|\bx|^2-1)$. 
\end{lemm}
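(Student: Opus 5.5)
The plan is to reduce everything to an ODE comparison along geodesics of $(X,g)$, in the spirit of Petrunin's argument. First, the statement $|\bx|>0$ follows from the inequality: if $\bx=0$ then $\bx^\perp=0$, and the inequality would give $0\ge 1-\frac c2>0$, which is impossible since $c<2$. So it suffices to prove $|\bx^\perp|\ge 1+\frac c2(|\bx|^2-1)$ at every point. Since $|\bx^\perp|^2=|\bx|^2-|\bx^T|^2$, this is equivalent to the following bound for every unit $v\in T_pX$ (whenever the right-hand side of the original inequality is nonnegative):
\[
\bangle{\bx,df(v)}^2\le |\bx|^2-\Big(1+\tfrac c2(|\bx|^2-1)\Big)^2 .
\]
When $1+\frac c2(|\bx|^2-1)<0$ there is nothing to prove.

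Fix a unit-speed $g$-geodesic $\gamma$ in $X$. This is defined for all time because $X$ is closed. Write $\Gamma=f\circ\gamma$, so that $|\Gamma'|=1$ and $\Gamma''=\bA(\gamma',\gamma')$ is normal with $|\Gamma''|\le c$. Set $Q=|\Gamma|^2\le 1$ and $k=\bangle{\Gamma,\Gamma'}$. Then
\[
Q'=2k,\qquad k'=1+\bangle{\Gamma^\perp,\Gamma''}\ge 1-c|\Gamma^\perp|\ge 1-c\sqrt{Q-k^2}.
\]
This gives a differential inequality for the planar curve $t\mapsto(Q(t),k(t))$, which is confined to $\{Q\le 1,\ k^2\le Q\}$ for all $t\in\RR$. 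The goal is to show that the region
\[
\Omega=\Big\{k^2\le Q-\big(1+\tfrac c2(Q-1)\big)^2\Big\}\cup\Big\{1+\tfrac c2(Q-1)<0\Big\}
\]
is the only place such a globally defined trajectory can live. In other words, any trajectory starting outside $\Omega$ is forced, forward or backward in time, to reach $Q>1$.

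The heuristic is the circle comparison. The boundary of $\Omega$ should be exactly the set of $(Q,k)$-values traced by a circle of curvature $c$ (radius $1/c$) that is internally tangent to the unit sphere. Such a circle is the extremal curve of curvature $\le c$ that stays in $\BB^N$, and along it $k'=1-c|\Gamma^\perp|$ with equality. I would verify this first by direct computation, since it pins down the constants $1$ and $\frac c2$.

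For the barrier argument, suppose $(Q(0),k(0))\notin\Omega$. Reversing $t$ if necessary, assume $k(0)>0$; the case $k(0)=0$ should be handled by perturbing or by taking the limit of nearby directions. Along the trajectory, $k'\ge 1-c\sqrt{Q-k^2}$. Outside $\Omega$ we have $\sqrt{Q-k^2}<1+\frac c2(Q-1)$, which gives a positive lower bound for $k'$ relative to what the extremal circle achieves. Comparing $dk/dQ=k'/(2k)$ with the slope of $\partial\Omega$ should show that the trajectory stays outside $\Omega$ with $k$ increasing. Hence $Q$ increases at a rate bounded below, and $Q$ exceeds $1$ in finite time, contradicting $f(X)\subset\BB^N$.

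The main obstacle is this phase-plane barrier step. It requires checking that $\partial\Omega$ is genuinely a barrier, i.e.\ that the vector field inequality points outward along it, using the hypothesis $c<2$, presumably to keep $1+\frac c2(Q-1)>0$ near $Q=1$ and the circle inside the ball. It also requires checking that trajectories cannot stall asymptotically. If the direct slope comparison is messy, I would instead take a maximizer of
\[
k^2-Q+\Big(1+\tfrac c2(Q-1)\Big)^2
\]
over the compact unit tangent bundle of $X$. At a maximizer, the first derivative along the geodesic vanishes and the second derivative is nonpositive, and I would argue this contradicts the differential inequality whenever the maximum is positive.
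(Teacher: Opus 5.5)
Your set-up is right: along a unit-speed geodesic, $Q'=2k$ and $k'\ge 1-ch$ with $h=|\Gamma-k\Gamma'|=\sqrt{Q-k^2}$. With $a:=1+\frac c2(Q-1)$, the inequality is equivalent to $F:=k^2-Q+a^2\le 0$ on the unit tangent bundle. This is a genuinely different route from the paper, which places the circle of radius $c^{-1}$ through $\bx$ tangent to $\xi$ and invokes Petrunin's bow-lemma comparison with a concatenated convex curve (plus a separate bow-lemma argument for $\bx\neq 0$).

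\textbf{The gap.} Your proposal stops exactly at the step that has to replace the bow lemma, and the mechanism you sketch there is wrong. Outside $\Omega$ the quantity $k$ need not increase, so you cannot conclude that $Q$ grows at a rate bounded below. For example, take $1<c<2$ and the circle $\Gamma=\by+c^{-1}n$ in a plane through $0$, with $|\by|$ slightly larger than $1-c^{-1}$. It lies outside $\Omega$ and leaves the ball, yet $k'=-c\bangle{\by,n}<0$ near the exit point. Two smaller points:
\begin{itemize}
\item Since $Q\ge 0$ and $c<2$, you have $a\ge 1-\frac c2>0$ everywhere. So your case $a<0$ is empty, and this positivity is exactly where $c<2$ is used.
\item The case $k=0$ is not a degenerate case to perturb away; it is the heart of the matter.
\end{itemize}

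\textbf{How your fallback closes it.} Your maximum-principle fallback does work once you supply the key computation. Since $a'=ck$,
\[
F'=2k\,(k'-1+ca),\qquad k'-1+ca\ \ge\ c\,(a-h)\ >\ 0 \quad\text{wherever } F>0,
\]
because $F=a^2-h^2>0$ and $a>0$ force $a>h$. Now take a maximizer of $F$ on the compact unit tangent bundle and suppose $F>0$ there.
\begin{itemize}
\item The first-order condition forces $k=0$. You could not use a second-order condition at a point with $k\neq 0$, since $k''$ involves $\nabla\bA$, which is uncontrolled.
\item At $k=0$ put $s=\sqrt Q\le 1$; this is the only place $f(X)\subset\BB^N$ enters. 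Then $F>0$ reads $(1-s)\bigl(1-\frac c2(1+s)\bigr)>0$, so $cs<2-c\le 1$ (recall $c\ge\fc(f)\ge 1$).
\item Hence $k'\ge 1-cs>0$ and $F''=2k'(k'-1+ca)>0$, contradicting maximality.
\end{itemize}
So $F\le 0$. Choosing $v=\bx^\top/|\bx^\top|$ (any $v$ if $\bx^\top=0$) gives $|\bx^\perp|\ge a$, and $|\bx|\ge a>0$ follows as you noted.

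The same identity also repairs the phase-plane version. $F$ increases while $k>0$, and $k$ cannot return to $0$ inside $\{F>0,\,Q\le 1\}$ because $k'>0$ there; a limiting form of this rules out stalling.

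\textbf{Comparison with the paper.} Completed this way, your proof is self-contained: no Schur comparison, no convex concatenation, and $\bx\neq 0$ comes for free. It pays for this by using compactness of the unit tangent bundle. The paper's argument is local along a single geodesic and makes the extremal internally tangent circle explicit.
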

\begin{proof}
Fix $p \in X$ and set $\bx = f(p)$. We first show that $\bx \neq 0$. If $\bx = 0$, then fix any unit $\xi \in T_p X$ and let $\gamma(t) = f(\exp_p(t\xi))$ be the image of the geodesic in $X$ (with respect to the pullback metric) through $p$ with initial velocity $\xi$. Because $\gamma$ has curvature $\leq c$, the bow lemma (comparison with a circular arc) implies that $|\gamma(\frac \pi c)| \geq \frac 2 c >1$, a contradiction. Thus, $|\bx|\neq 0$. This proves the first part of the lemma. 

We now decompose $\bx = \bx^\perp+\bx^\top$. If $\bx^\top \neq 0$ then set $\xi = \frac{\bx^\top}{|\bx^\top|}$. Otherwise, choose any $\xi \in T_pX$. If $\bx^\perp \neq 0$ then set $\eta = \frac{\bx^\perp}{|\bx^\perp|}$. Otherwise, choose any unit $\eta$ perpendicular to $\xi$. Let $\Pi = \Span\{\eta,\xi\}$ and note that $\bx \in \Pi$.

Let $\by = \bx - c^{-1}\eta$ and set $C= \{\bz \in \Pi : |\bz - \by| = c^{-1}\}$, a circle of radius $c^{-1}$ passing through $\bx$, tangent to $\xi$ there. Observe that if the assertion failed, i.e.\
\begin{equation}\label{eq:angle-contr-assump}
|\bx^\perp| < 1 + \frac c2 (|\bx|^2 -1)
\end{equation}
then $C$ is not contained in the unit ball at the origin. Indeed,  this would give
\[
|\by|^2 = |\bx|^2 + \frac{1}{c^2} - \frac 2c \bangle{\bx,\eta} =  |\bx|^2 + \frac{1}{c^2} - \frac 2c |\bx^\perp| >  \frac{1}{c^2} - \frac{2}{c} +1 = \left( 1 - \frac 1 c\right)^2. 
\]
At this point, we may conclude the proof exactly as in \cite[Lemma 4.1]{Petrunin:tori}. 

Indeed, if \eqref{eq:angle-contr-assump} held, we may form a comparison curve $\sigma * \gamma$ using a unit speed circular arc $\sigma$ from the origin that meets $\bx$ with tangent $\xi$. We can do this, so that $\sigma$ may be concatenated with a parametrization of a portion of $C$, denoted by $\gamma$, in a $C^1$ convex manner, so that $\gamma(t_0) \not \in \BB^N$. For $\tilde\gamma$ the image under $f$ of the geodesic in $X$ from $p$ with initial velocity $\xi$, the bow lemma (for $\sigma * \gamma$ versus $\sigma * \tilde\gamma$; cf.\ \cite[Lemma 3.19]{PetruninBarrera}) implies that $\tilde\gamma$ exits $\BB^N$. This is a contradiction. 
\end{proof}

\subsection{Conformally positive sectional curvature} 
In this section we prove Proposition \ref{prop:secPetrunin}. Consider $f : X^n \to \BB^N$ with $n\geq 2$. Later we will assume $\fc(f) < \sqrt\frac 32$. 

\begin{lemm}\label{lemm:sff-bds-off-diag}
For $i \neq j$, we have $|\bA(e_i,e_i) + \bA(e_j,e_j)|^2 + 4 |\bA(e_i,e_j)|^2 \leq 4 \fc(f)^2$. 
\end{lemm}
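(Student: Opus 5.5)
Evaluate $\bA$ on the two unit diagonal vectors $u_\pm = \frac{1}{\sqrt2}(e_i \pm e_j)$, which are unit vectors for the pullback metric because $e_i, e_j$ are orthonormal. By bilinearity and symmetry of $\bA$,
\[
\bA(u_\pm,u_\pm) = \tfrac12\bigl(\bA(e_i,e_i)+\bA(e_j,e_j)\bigr) \pm \bA(e_i,e_j).
\]
Set $\mathbf{a} = \tfrac12(\bA(e_i,e_i)+\bA(e_j,e_j))$ and $\mathbf{b} = \bA(e_i,e_j)$.

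The definition \eqref{eq:defn-nc} of $\fc(f)$ gives $|\mathbf{a}\pm\mathbf{b}|^2 \le \fc(f)^2$ for both signs. Averaging the two inequalities and using the parallelogram law gives
\[
|\mathbf{a}|^2 + |\mathbf{b}|^2 = \tfrac12\bigl(|\mathbf{a}+\mathbf{b}|^2 + |\mathbf{a}-\mathbf{b}|^2\bigr) \le \fc(f)^2 .
\]
Multiplying by $4$, this is $|\bA(e_i,e_i)+\bA(e_j,e_j)|^2 + 4|\bA(e_i,e_j)|^2 \le 4\fc(f)^2$, which is the claim.

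There is no real obstacle here. The only points to check are that $u_\pm$ are unit vectors, which uses $\bangle{e_i,e_j}=0$, and that the cross terms $\pm 2\bangle{\mathbf{a},\mathbf{b}}$ cancel when the two inequalities are added.
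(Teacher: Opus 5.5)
Your proposal is correct and follows the paper's proof: you test the definition of $\fc(f)$ on $u_\pm = \frac{1}{\sqrt2}(e_i \pm e_j)$ and then apply the parallelogram identity. Your rescaled notation $\mathbf{a},\mathbf{b}$ just makes the cancellation of the cross terms explicit, which the paper leaves implicit.
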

\begin{proof}
Take $u_\pm = \frac{1}{\sqrt{2}}(e_i \pm e_j)$ in \eqref{eq:defn-nc} to get $|\bA(e_i,e_i) + \bA(e_j,e_j) \pm 2 \bA(e_i,e_j)|^2 \leq 4 \fc(f)^2$. The assertion then follows from the parallelogram identity. 
\end{proof}

We are now ready for the proof of conformal positivity of sectional curvature. 

\begin{proof}[Proof of Proposition \ref{prop:secPetrunin}]
Let $g$ denote the induced metric and consider $\psi=\psi(r)$ for $r=|\bx|$ the ambient distance to the origin. We compute
\begin{align*}
\nabla \psi & = \psi'(r) \frac{\bx^\top}{r} \\
|\nabla \psi|^2 & = \frac{\psi'(r)^2}{r^2}|\bx^\top|^2\\
\nabla^2 r^2 & = 2 g + 2\bangle{\bx^\perp , \bA }\\
\nabla^2 \psi & = \left(\frac{\psi''(r)}{r^2} - \frac{\psi'(r)}{r^3} \right) \bx^\top\otimes \bx^\top + \frac{\psi'(r)}{r} g + \frac{\psi'(r)}{r} \bangle{\bx^\perp,\bA}
\end{align*}
Let $\tilde g= e^{2\psi}g$. For $e_1,\dots,e_n$ a $g$-orthonormal basis, fix the corresponding $\tilde g$-orthonormal basis $\tilde e_i = e^{-\psi}e_i$. 

Then, for $i\neq j$, the conformal change of curvature formula \eqref{eq:confRm} and the Gauss equations \eqref{eq:Gauss} gives
\begin{align*}
& e^{2\psi} \tilde \sec_{ij} \\
& = e^{2\psi}  \widetilde\Rm(\tilde e_i,\tilde e_j, \tilde e_j, \tilde e_i) \\
& = \Rm(e_i,e_j,e_j,e_i) - \nabla^2 \psi(e_i,e_i) - \nabla^2 \psi(e_j,e_j) + (d\psi(e_i))^2 + (d\psi(e_j))^2 - |\nabla \psi|^2\\
& =  - 2 \frac{\psi'(r)}{r} +  \bangle{\bA(e_i,e_i),\bA(e_j,e_j)} - |\bA(e_i,e_j)|^2 - \frac{\psi'(r)}{r} \bangle{\bx^\perp,\bA(e_i,e_i)+\bA(e_j,e_j)} \\
& \qquad - \left( \frac{\psi''(r)}{r^2} - \frac{\psi'(r)}{r^3} - \frac{\psi'(r)^2}{r^2} \right)(\bangle{\bx,e_i}^2 + \bangle{\bx,e_j}^2) - \frac{\psi'(r)^2}{r^2} |\bx^\top|^2 . 
\end{align*}
We take $\psi(r) = - \frac c2 r^2$ for $c>0$ to be chosen below. This gives
\begin{align*}
e^{2\psi} \tilde \sec_{ij} & = 2c +  \bangle{\bA(e_i,e_i),\bA(e_j,e_j)} - |\bA(e_i,e_j)|^2 + c \bangle{\bx^\perp,\bA(e_i,e_i)+\bA(e_j,e_j)} \\
& \qquad + c^2 (\bangle{\bx,e_i}^2 + \bangle{\bx,e_j}^2) - c^2 |\bx^\top|^2 \\
& \geq  2c + \bangle{\bA(e_i,e_i),\bA(e_j,e_j)} - |\bA(e_i,e_j)|^2 - c|\bx^\perp| |\bA(e_i,e_i)+\bA(e_j,e_j)| \\
& \qquad  - c^2 |\bx^\top|^2 \\
& \geq 2c + \bangle{\bA(e_i,e_i),\bA(e_j,e_j)} - |\bA(e_i,e_j)|^2 - \frac{c}{4}  |\bA(e_i,e_i)+\bA(e_j,e_j)|^2 \\
& \qquad - c|\bx^\perp|^2 - c^2 |\bx^\top|^2 \\
& \geq 2c - \fc(f)^2 + \bangle{\bA(e_i,e_i),\bA(e_j,e_j)} + \frac{1-c}{4}  |\bA(e_i,e_i)+\bA(e_j,e_j)|^2 \\
& \qquad - c|\bx^\perp|^2 - c^2 |\bx^\top|^2.
\end{align*}
We used Lemma \ref{lemm:sff-bds-off-diag} to bound the off-diagonal terms in the second fundamental form in the final inequality. By considering the model case \eqref{eq:SnS1map}, we see that in some cases the $\bangle{\bA(e_i,e_i),\bA(e_j,e_j)}$ terms cannot be controlled in a sharp manner. Thus, we must choose $c=3$ so they cancel. With this choice we have
\begin{align*}
& e^{2\psi} \widetilde {\sec}_{ij} \\
& \geq 2c - \fc(f)^2 + \frac{3-c}{2}\bangle{\bA(e_i,e_i),\bA(e_j,e_j)} + \frac{1-c}{4}  |\bA(e_i,e_i)|^2 + \frac{1-c}{4} |\bA(e_j,e_j)|^2 \\
& \qquad - c|\bx^\perp|^2 - c^2 |\bx^\top|^2\\
& = 6 - \fc(f)^2 - \frac{1}{2}  |\bA(e_i,e_i)|^2 - \frac{1}{2} |\bA(e_j,e_j)|^2 \\
& \qquad - 3 |\bx^\perp|^2 - 9 |\bx^\top|^2\\
& \geq 6 - 2 \fc(f)^2  - 3 |\bx^\perp|^2 - 9 |\bx^\top|^2\\
& \geq 6 - 2 \fc(f)^2  + 6 |\bx^\perp|^2 - 9 |\bx|^2. 
\end{align*}
Now, we assume that $\fc(f) \leq \sqrt{\frac 32}$. Lemma \ref{lemm:angle} gives 
\[
6 |\bx^\perp|^2 - 9 |\bx|^2 \geq 6 \left( 1+ \frac 12 \sqrt{\frac32} (|\bx|^2-1)\right)^2 - 9 |\bx|^2.
\]
This is a quadratic in $|\bx|^2$, and simple calculation proves the minimum of this expression on $|\bx| \in [0,1]$ is attained at $|\bx|=1$, so 
\[
6 |\bx^\perp|^2 - 9 |\bx|^2 \geq -3.
\]
Putting this together, we get 
\[
e^{2\psi} \widetilde{\sec}_{ij} \geq 3-2\fc (f)^2. 
\]
This proves Proposition \ref{prop:secPetrunin}. 
\end{proof}

\subsection{Conformally positive isotropic curvature}

In this section we use a version of the proof from the previous section to prove the following result. 
\begin{prop}\label{prop:PIC2-petrunin}
For $n\geq 4$, if $X^n$ is closed and admits an immersion $f : X \to\BB^N$ with $\fc(f) < \sqrt\frac 43$ then the induced metric $g = f^* g_{\RR^N}$ is conformally equivalent to $\tilde g$ that is strictly PIC-2 in the sense of \eqref{eq:tilde-PIC2} below. 
\end{prop}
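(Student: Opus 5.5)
The plan is to repeat the proof of Proposition \ref{prop:secPetrunin}, keeping the same conformal factor $\tilde g = e^{2\psi}g$ with $\psi = -\frac{c}{2}r^2$, $r=|\bx|$. Only the curvature expression we bound changes: sectional curvatures are replaced by the PIC-2 quantity. Recall that $\tilde g$ is strictly PIC-2 if for every $\tilde g$-orthonormal four-frame $e_1,\dots,e_4$ and all $\lambda,\mu\in[0,1]$,
\[
\widetilde\Rm_{1331}+\lambda^2\widetilde\Rm_{1441}+\mu^2\widetilde\Rm_{2332}+\lambda^2\mu^2\widetilde\Rm_{2442}-2\lambda\mu\,\widetilde\Rm_{1234}>0 .
\]
I expect \eqref{eq:tilde-PIC2} to be this expression or an equivalent one. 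The whole argument is a pointwise algebraic bound on this expression, using Lemma \ref{lemm:angle} and second fundamental form estimates as in Lemma \ref{lemm:sff-bds-off-diag}.

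\textbf{Step 1 (conformal terms).} Insert \eqref{eq:confRm} into the expression above. The term $\widetilde\Rm_{1234}$ involves four distinct indices, so all Kulkarni--Nomizu terms vanish there. Only $\Rm_{1234}=\langle\bA_{14},\bA_{23}\rangle-\langle\bA_{13},\bA_{24}\rangle$ survives. Each sectional-type term produces the same expression as in the proof of Proposition \ref{prop:secPetrunin}, namely
\[
2c+\langle \bA_{ii},\bA_{jj}\rangle-|\bA_{ij}|^2+c\langle\bx^\perp,\bA_{ii}+\bA_{jj}\rangle+c^2(\langle\bx,e_i\rangle^2+\langle\bx,e_j\rangle^2)-c^2|\bx^\top|^2 .
\]
We weight these with $1,\lambda^2,\mu^2,\lambda^2\mu^2$. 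The total weight is $(1+\lambda^2)(1+\mu^2)$, which multiplies the constant terms $2c - c^2|\bx^\top|^2$ and the $-c|\bx^\perp|^2$ produced by absorbing the cross term by Cauchy--Schwarz.

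\textbf{Step 2 (the algebraic lemma).} This is the main obstacle, and is the role of Lemma \ref{lemm:PIC2-helper-lemma}. We must show that the weighted combination of $\langle\bA_{ii},\bA_{jj}\rangle$, $-|\bA_{ij}|^2$, $-2\lambda\mu\Rm_{1234}$ and $c\langle\bx^\perp,\cdot\rangle$ is bounded below by $-(1+\lambda^2)(1+\mu^2)\,K(\fc(f))$ for an explicit $K$. Here $K$ should be linear in $\fc(f)^2$, mirroring the bound $-2\fc(f)^2$ in the sectional case.

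My first attempt is to polarize $|\bA(u,u)|\le\fc(f)$ on the vectors $u=(e_1+\lambda e_2 \pm (e_3+\mu e_4))/\text{norm}$, together with their variants with $\pm\lambda$ and $\pm\mu$. The mixed term $\Rm_{1234}$ arises naturally from such polarizations, which is why $\lambda$ and $\mu$ must be inserted as coefficients of $e_2$ and $e_4$.

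For the constant $c$, I would choose it so that the uncontrolled inner products $\langle\bA_{ii},\bA_{jj}\rangle$ cancel, as $c=3$ did before. The value may now differ; $c=2$ seems plausible. The target is to fix $c$ so the threshold comes out as $\fc(f)^2<\frac43$, matching the isotropic Veronese $\CC P^2$, which is the model equality case.

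\textbf{Step 3 (using the ball).} After Step 2, the remaining quantity is
\[
(1+\lambda^2)(1+\mu^2)\bigl(a - b\,\fc(f)^2 - c|\bx^\perp|^2 - c^2|\bx^\top|^2\bigr).
\]
Write $|\bx^\top|^2=|\bx|^2-|\bx^\perp|^2$ and apply Lemma \ref{lemm:angle} with $\fc(f)\le\sqrt{4/3}$. This gives a lower bound for $|\bx^\perp|$ in terms of $|\bx|$. The result is a quadratic in $|\bx|^2$ on $[0,1]$, which we check is minimized at $|\bx|=1$. This yields $e^{2\psi}(\text{PIC-2 expression})\ge(1+\lambda^2)(1+\mu^2)\,\kappa\,(4-3\fc(f)^2)$ for some $\kappa>0$, which is strictly positive under the hypothesis $\fc(f)<\sqrt{4/3}$. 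Compactness of $X$ and of the frame bundle, together with $\lambda,\mu\in[0,1]$, then give strictness uniformly.
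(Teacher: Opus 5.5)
Your outline is the paper's: the same conformal factor $\psi=-\frac c2 r^2$, the same Step 1 bookkeeping, and Lemma \ref{lemm:angle} at the end. However, the step you call the main obstacle is never carried out, and both concrete choices you make there would fail.

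\textbf{The missing estimate.} Write $P=(1+\lambda^2)(1+\mu^2)$, $\bY=\bA(e_1,e_3)-\lambda\mu\bA(e_2,e_4)$ and $\bZ=\lambda\bA(e_1,e_4)+\mu\bA(e_2,e_3)$. The off-diagonal Gauss terms together with $-2\lambda\mu\Rm_{1234}$ combine exactly into $-|\bY|^2-|\bZ|^2$. To control these, the polarization vectors need two fixes. First, $\mu$ must sit on $e_2$ and $\lambda$ on $e_4$; your vectors have them swapped relative to your own PIC-2 expression. Second, the two blocks need the weights $\sqrt{1+\lambda^2}$ and $\sqrt{1+\mu^2}$. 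This makes the diagonal part exactly $\bS=(1+\lambda^2)\bW+(1+\mu^2)\bX$, which is the combination paired with $\bx^\perp$ in Step 1. With equal weights you get $\bW+\bX$, which does not match when $\lambda\ne\mu$. The paper takes $u_\pm=\sqrt{1+\lambda^2}\,e_1\pm\sqrt{1+\mu^2}\,e_3$ and $v_\pm=\mu\sqrt{1+\lambda^2}\,e_2\mp\lambda\sqrt{1+\mu^2}\,e_4$. Then $\bA(u_\pm,u_\pm)+\bA(v_\pm,v_\pm)=\bS\pm2\sqrt P\,\bY$ and $|u_\pm|^2+|v_\pm|^2=2P$. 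The triangle inequality and the parallelogram identity give $|\bS|^2+4P|\bY|^2\le4P^2\fc(f)^2$, and an analogous pair gives the same for $\bZ$. Summing yields $\frac{1}{2P}|\bS|^2+|\bY|^2+|\bZ|^2\le2P\fc(f)^2$. Polarizing on the single vectors $u_\pm\pm v_\pm$ gives the same bound, so your instinct is repairable.

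\textbf{The value of $c$.} $c=2$ is wrong. After Cauchy--Schwarz on $c\langle\bx^\perp,\bS\rangle$, the remaining $\bA$-terms are $\langle\bW,\bX\rangle+\frac{2-c}{4P}|\bS|^2$. Since $\frac{|\bS|^2}{2P}=\frac{1+\lambda^2}{2(1+\mu^2)}|\bW|^2+\frac{1+\mu^2}{2(1+\lambda^2)}|\bX|^2+\langle\bW,\bX\rangle$, the coefficient of $\langle\bW,\bX\rangle$ is $\frac{4-c}{2}$. Your own cancellation principle therefore forces $c=4$. With $c=2$ the $|\bS|^2$ term vanishes instead, and all you have is $\langle\bW,\bX\rangle\ge-P\fc(f)^2$. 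The chain then yields only $P(4-3\fc(f)^2-2|\bx^\perp|^2-4|\bx^\top|^2)$, which equals $P(2-3\fc(f)^2)<0$ at $|\bx|=1$. With $c=4$, use $|\bW|\le(1+\mu^2)\fc(f)$ and $|\bX|\le(1+\lambda^2)\fc(f)$. This gives $P^{-1}e^{2\psi}\tilde\cQ_{\lambda,\mu}\ge8-3\fc(f)^2+12|\bx^\perp|^2-16|\bx|^2\ge4-3\fc(f)^2$, where the last step is Lemma \ref{lemm:angle}. This is your Step 3 with $\kappa=1$.
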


We recall that the strict PIC-2 condition is
\begin{multline}\label{eq:tilde-PIC2}
\tilde \cQ_{\lambda,\mu}(\tilde e_1,\tilde e_2,\tilde e_3,\tilde e_4) : = \widetilde{\sec}(\tilde e_1,\tilde e_3) + \lambda^2 \widetilde{\sec}(\tilde e_1,\tilde e_4) + \mu^2 \widetilde{\sec}(\tilde e_2,\tilde e_3) \\ + \lambda^2 \mu^2 \widetilde{\sec}(\tilde e_2,\tilde e_4) - 2\lambda\mu \widetilde{\Rm}(\tilde e_1,\tilde e_2,\tilde e_3,\tilde e_4) > 0
\end{multline}
for $\tilde e_1,\dots,\tilde e_4$ any $\tilde g$-orthonormal set and $\lambda,\mu \in [-1,1]$. 

Before proving Proposition \ref{prop:PIC2-petrunin}, we deduce the following consequence. 
\begin{proof}[Proof of Theorem \ref{theo:norm-curv-sphere-thm} using Proposition \ref{prop:PIC2-petrunin}] 
Consider $X^n$ closed and an immersion $f : X \to\BB^N$ with $\fc(f) < \sqrt\frac 43$. By work of Petrunin \cite{Petrunin:veronese}, $X^n$ is homeomorphic to an $n$-sphere, and is thus simply connected. This already proves the assertion for $n \leq 3$. 

For $n\geq 4$, by combining Proposition \ref{prop:PIC2-petrunin} with the classification of strict PIC-2 manifolds (\emph{en route} to their proof of the differentiable sphere theorem) by Brendle--Schoen \cite[Theorem 3]{BrendleSchoen}, we have that $X$ is diffeomorphic to a (standard) spherical space form and thus a standard sphere by simple connectivity.
\end{proof}

Consider $X^n$ closed and an immersion $f : X^n \to \BB^N$. Later we will assume $\fc(f) < \sqrt\frac 43$. We fix  $p \in X$, with $\bx = f(p)$ and $e_1,\dots,e_4 \in T_p X$ an arbitrary orthonormal set (with respect to $g=f^*g_{\RR^N}$). (Of course, if we set $\tilde e_i = e^{-\psi} e_i$, $i=1,\dots,4$, this defines an arbitrary $\tilde g$-orthonormal set.) We also fix $\lambda, \mu \in [-1,1]$ and define
\begin{align*}
\bW & = \bA(e_1,e_1) + \mu^2 \bA(e_2,e_2) \\
\bX & = \bA(e_3,e_3) + \lambda^2 \bA(e_4,e_4)\\
\bY & = \bA(e_1,e_3) - \lambda \mu \bA(e_2,e_4)\\
\bZ & = \lambda \bA(e_1,e_4) + \mu \bA(e_2,e_3)\\
\bS & = (1+\lambda^2)\bW + (1+\mu^2)\bX.
\end{align*} 
The following will be used in a similar manner to Lemma \ref{lemm:sff-bds-off-diag}. 
\begin{lemm}\label{lemm:PIC2-helper-lemma}
We have
\begin{align*}
|\bS|^2 + 4 (1+\lambda^2)(1+\mu^2) |\bY|^2 & \leq 4  (1+\lambda^2)^2(1+\mu^2)^2 \fc(f)^2\\
|\bS|^2 + 4 (1+\lambda^2)(1+\mu^2) |\bZ|^2 & \leq 4  (1+\lambda^2)^2(1+\mu^2)^2 \fc(f)^2.
\end{align*}
\end{lemm}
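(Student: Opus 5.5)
The plan is to imitate the proof of Lemma \ref{lemm:sff-bds-off-diag}: find test vectors $u\in T_pX$ whose $\bA(u,u)$, after averaging away unwanted cross terms, equals $\bS \pm 2\sqrt{(1+\lambda^2)(1+\mu^2)}\,\bY$ (respectively $\bZ$). Then apply the parallelogram identity.

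Set $a = \sqrt{1+\lambda^2}$ and $b=\sqrt{1+\mu^2}$. For signs $s,\sigma\in\{\pm1\}$ consider
\[
u_{s,\sigma} = a(e_1 + s\mu e_2) + \sigma b (e_3 - s\lambda e_4).
\]
By orthonormality,
\[
|u_{s,\sigma}|^2 = a^2(1+\mu^2) + b^2(1+\lambda^2) = 2(1+\lambda^2)(1+\mu^2),
\]
independently of $s,\sigma$. Expanding bilinearly, the diagonal part of $\bA(u_{s,\sigma},u_{s,\sigma})$ is $a^2\bW + b^2\bX = \bS$. The $e_1e_3$ and $e_2e_4$ cross terms contribute $2ab\sigma(\bA(e_1,e_3) - \lambda\mu\bA(e_2,e_4)) = 2ab\sigma\bY$, since $s^2=1$. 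Every remaining cross term ($e_1e_2$, $e_3e_4$, $e_1e_4$, $e_2e_3$) carries exactly one factor of $s$.

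Averaging over $s=\pm1$ therefore gives
\[
\tfrac12\sum_{s}\bA(u_{s,\sigma},u_{s,\sigma}) = \bS + 2ab\sigma\bY.
\]
By the triangle inequality and \eqref{eq:defn-nc}, the norm of this average is at most $\fc(f)\,|u|^2 = 2(1+\lambda^2)(1+\mu^2)\fc(f)$. Squaring, and then adding the two inequalities for $\sigma=\pm1$ (parallelogram identity), yields
\[
|\bS|^2 + 4a^2b^2|\bY|^2 \le 4(1+\lambda^2)^2(1+\mu^2)^2\fc(f)^2,
\]
which is the first claim.

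For the second claim the same scheme works with
\[
u_{s,\sigma} = a(e_1+s\mu e_2) + \sigma b(s e_3 + \lambda e_4).
\]
Here the norm is again $2(1+\lambda^2)(1+\mu^2)$ and the diagonal part is again $a^2\bW + b^2\bX = \bS$. The $e_1e_4$ and $e_2e_3$ cross terms give $2ab\sigma(\lambda\bA(e_1,e_4)+\mu\bA(e_2,e_3)) = 2ab\sigma\bZ$. The $e_1e_2$, $e_3e_4$, $e_1e_3$ and $e_2e_4$ terms are each odd in $s$ and cancel on averaging. The rest is identical.

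The only real obstacle is choosing the test vectors, i.e.\ arranging the signs so that a single averaging parameter $s$ kills all four unwanted mixed terms while leaving the desired combination. The diagonal weights $a^2=1+\lambda^2$, $b^2=1+\mu^2$ are forced by requiring the diagonal part to be exactly $\bS$. The degenerate cases $\lambda=0$ or $\mu=0$ need no separate treatment, since $u_{s,\sigma}\neq 0$ always.
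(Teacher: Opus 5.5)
Your proof is correct and is essentially the paper's proof. Write $u_{s,\sigma} = u_\sigma + s\,v_\sigma$, with $u_\sigma = a e_1 + \sigma b e_3$ and $v_\sigma = a\mu e_2 - \sigma \lambda b e_4$ (respectively $u_\sigma = a e_1 + \sigma\lambda b e_4$ and $v_\sigma = a\mu e_2 + \sigma b e_3$). Then your $s$-average is exactly the paper's $\bA(u_\pm,u_\pm)+\bA(v_\pm,v_\pm)$. The only difference is cosmetic: the paper bounds $|\bA(u_\pm,u_\pm)|$ and $|\bA(v_\pm,v_\pm)|$ separately rather than bounding $\bA$ on $u_\sigma \pm v_\sigma$ and averaging, and this gives the same constant because $u_\sigma \perp v_\sigma$.
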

\begin{proof}
Let
\[
u_{\pm} : = \sqrt{1+\lambda^2} \, e_1 \pm \sqrt{1+\mu^2} \, e_3, \qquad v_{\pm}  : = \mu \sqrt{1+\lambda^2} \, e_2 \mp  \lambda \sqrt{1+\mu^2} \, e_4. 
\]
Below, we will consider ``$\pm$'' to be fixed as one of $+$ or $-$, with the choice made consistently through the full expression. We have 
\begin{align*}
\bA(u_{\pm},u_{\pm}) & = (1+\lambda^2)\bA(e_1,e_1) + (1+\mu^2)\bA(e_3,e_3) \pm 2\sqrt{1+\lambda^2} \, \sqrt{1+\mu^2} \, \bA(e_1,e_3)\\
\bA(v_\pm,v_\pm) & = \mu^2(1+\lambda^2)\bA(e_2,e_2) + \lambda^2(1+\mu^2) \bA(e_4,e_4) \mp 2 \mu \lambda \sqrt{1+\lambda^2} \, \sqrt{1+\mu^2} \, \bA(e_2,e_4). 
\end{align*}
Thus
\begin{align*}
\bA(u_\pm,u_\pm) + \bA(v_\pm,v_\pm) = \bS \pm 2 \sqrt{1+\lambda^2} \, \sqrt{1+\mu^2}\, \bY. 
\end{align*}
On the other hand,
\begin{align*}
|\bA(u_\pm,u_\pm) + \bA(v_\pm,v_\pm) | & \leq |\bA(u_\pm,u_\pm)| + | \bA(v_\pm,v_\pm) |\\
& \leq \fc(f)\left( |u_\pm|^2 + |v_\pm|^2 \right)\\
& =2\fc(f)(1+\lambda^2)(1 + \mu^2)
\end{align*}
Combining these expressions with the parallelogram identity, the first inequality follows. 

For the second inequality, we set
\[
u_{\pm} : = \sqrt{1+\lambda^2} \, e_1 \pm \lambda \sqrt{1+\mu^2} \, e_4, \qquad v_{\pm}  : = \mu \sqrt{1+\lambda^2} \, e_2 \pm  \sqrt{1+\mu^2} \, e_3. 
\]
The inequality then follows by essentially same steps as used above. This completes the proof. 
\end{proof}

We now have the proof of the conformal PIC-2 property.
\begin{proof}
We argue as in the proof of Proposition \ref{prop:secPetrunin}. For $\psi = \psi(r) = -\frac c 2 r^2$, with $c>0$ to be chosen we set $\tilde g= e^{2\psi} g$. In Proposition \ref{prop:secPetrunin} we computed 
\begin{align*}
& e^{2\psi} \widetilde{\Rm}(\tilde e_i,\tilde e_j,\tilde e_j,\tilde e_i) 
\\
& \geq 2c +  \bangle{\bA(e_i,e_i),\bA(e_j,e_j)} - |\bA(e_i,e_j)|^2 + c \bangle{\bx^\perp,\bA(e_i,e_i)+\bA(e_j,e_j)} \\
& \qquad  - c^2 |\bx^\top|^2.
\end{align*}
Moreover, \eqref{eq:confRm} and \eqref{eq:Gauss} give that 
\begin{align*}
e^{2\psi} \widetilde{\Rm}(\tilde e_1,\tilde e_2,\tilde e_3,\tilde e_4) & = \Rm(e_1,e_2,e_3,e_4) \\
& = \bangle{\bA(e_1,e_4),\bA(e_2,e_3)} - \bangle{\bA(e_1,e_3),\bA(e_2,e_4)}. 
\end{align*}
Thus (recalling the definition in \eqref{eq:tilde-PIC2}) we have
\begin{align*}
& e^{2\psi} \tilde \cQ_{\lambda,\mu}(\tilde e_1,\tilde e_2,\tilde e_3,\tilde e_4) \\
& \geq 2c(1+\lambda^2)(1 + \mu^2)  + \bangle{ \bA(e_1,e_1) + \mu^2 \bA(e_2,e_2) ,\bA(e_3,e_3) + \lambda^2 \bA(e_4,e_4)} \\
& \qquad - |\bA(e_1,e_3)|^2 - \lambda^2 |\bA(e_1,e_4)|^2 - \mu^2 |\bA(e_2,e_3)|^2 - \lambda^2\mu^2|\bA(e_2,e_4)|^2\\
& \qquad -2 \lambda \mu \bangle{\bA(e_1,e_4),\bA(e_2,e_3)} + 2\lambda \mu \bangle{\bA(e_1,e_3),\bA(e_2,e_4)}\\
& \qquad + c (1+\lambda^2) \bangle{\bx^\perp, \bA(e_1,e_1) + \mu^2 \bA(e_2,e_2) }\\
& \qquad + c (1+\mu^2) \bangle{\bx^\perp,  \bA(e_3,e_3) + \lambda^2 \bA(e_4,e_4) }\\
& \qquad - c^2 (1+\lambda^2)(1+\mu^2) |\bx^\top|^2\\
& = 2c(1+\lambda^2)(1+\mu^2) + \bangle{\bW,\bX} - |\bY|^2 - |\bZ|^2  + c\bangle{\bx^\perp,\bS}
- c^2(1+\lambda^2)(1+\mu^2) |\bx^\top|^2\\
& \geq 2c(1+\lambda^2)(1+\mu^2) + \bangle{\bW,\bX} - |\bY|^2 - |\bZ|^2  - \frac{c}{4(1+\lambda^2)(1+\mu^2)} |\bS|^2 \\
& \qquad - c(1+\lambda^2)(1+\mu^2) |\bx^\perp|^2 - c^2(1+\lambda^2)(1+\mu^2)|\bx^\top|^2.
\end{align*}
Summing the inequalities in Lemma \ref{lemm:PIC2-helper-lemma} gives
\[
\frac{1}{2(1+\lambda^2)(1+\mu^2)} |\bS|^2 +  |\bY|^2 + |\bZ|^2  \leq   2(1+\lambda^2)(1+\mu^2) \fc(f)^2
\]
Thus we obtain
\begin{align*}
e^{2\psi} \tilde \cQ_{\lambda,\mu}(\tilde e_1,\tilde e_2,\tilde e_3,\tilde e_4) & \geq 2(1+\lambda^2)(1+\mu^2)(c - \fc(f)^2)\\
& \qquad  + \bangle{\bW,\bX}  + \frac{2-c}{4(1+\lambda^2)(1+\mu^2)} |\bS|^2 \\
& \qquad - c(1+\lambda^2)(1+\mu^2) |\bx^\perp|^2 - c^2(1+\lambda^2)(1+\mu^2)|\bx^\top|^2.
\end{align*}
Observing that
\[
\frac{|\bS|^2}{2(1+\lambda^2)(1+\mu^2)} = \frac{1+\lambda^2}{2(1+\mu^2)} |\bW|^2 + \frac{1+\mu^2}{2(1+\lambda^2)} |\bX|^2 + \bangle{\bW,\bX},
\]
we thus choose $c=4$. Putting this together, we have 
\begin{align*}
e^{2\psi} \tilde \cQ_{\lambda,\mu}(\tilde e_1,\tilde e_2,\tilde e_3,\tilde e_4) & \geq 2(1+\lambda^2)(1+\mu^2)(4 - \fc(f)^2)\\
& \qquad - \frac{1+\lambda^2}{2(1+\mu^2)} |\bW|^2 - \frac{1+\mu^2}{2(1+\lambda^2)} |\bX|^2  \\
& \qquad - 4(1+\lambda^2)(1+\mu^2) |\bx^\perp|^2 - 16(1+\lambda^2)(1+\mu^2)|\bx^\top|^2.
\end{align*}
Now, we observe that \eqref{eq:defn-nc} gives
\[
|\bW| \leq (1+\mu^2)\fc(f), \qquad |\bX| \leq (1+\lambda^2)\fc(f), 
\]
so
\begin{align*}
e^{2\psi} \tilde \cQ_{\lambda,\mu}(\tilde e_1,\tilde e_2,\tilde e_3,\tilde e_4) & \geq (1+\lambda^2)(1+\mu^2)(8 - 3\fc(f)^2)\\
& \qquad - 4(1+\lambda^2)(1+\mu^2) |\bx^\perp|^2 - 16(1+\lambda^2)(1+\mu^2)|\bx^\top|^2.
\end{align*}
We may rearrange this and use $|\bx|^2 = |\bx^\top|^2 + |\bx^\perp|^2$ to write
\begin{align*}
\frac{e^{2\psi} }{(1+\lambda^2)(1+\mu^2)} \tilde \cQ_{\lambda,\mu}(\tilde e_1,\tilde e_2,\tilde e_3,\tilde e_4) & \geq 8 - 3\fc(f)^2 + 12 |\bx^\perp|^2 - 16|\bx|^2.
\end{align*}
Now assume that $\fc(f) < \sqrt\frac 43$. Using Lemma \ref{lemm:angle} (as in Proposition \ref{prop:secPetrunin}), we find $12 |\bx^\perp|^2 - 16|\bx|^2 \geq - 4$. Using this, we have $\tilde \cQ_{\lambda,\mu}(\tilde e_1,\tilde e_2,\tilde e_3,\tilde e_4) > 0$. This completes the proof.
\end{proof}

\section{Remarks and Conjectures} \label{sec:conj} 

A natural question is to understand the effects of other intrinsic curvature conditions on the minimal normal curvature of a closed manifold immersed in $\BB^N$. In addition to $\sec>0$ and strictly PIC-2, we also studied the sharp lower bound for $\fc(f)$, assuming the non-existence of metrics with positive Ricci curvature or PIC. One may speculate that under these stronger conditions might lead to better bounds as compared to Proposition \ref{prop:secPetrunin} and Proposition \ref{prop:PIC2-petrunin}. However, similar computations yield the following.

\begin{prop}
	Let $X^n$ be closed  and admits an immersion $f: X \to \BB^N$. 
	\begin{enumerate}
		\item Suppose $X$ does not admit any metric with $\Ric>0$. Then $\fc(f)<\sqrt{\tfrac 32}$.
		\item Suppose $X$ does not admit any metric with strictly PIC. Then $\fc(f)<\sqrt{\tfrac 43}$.
	\end{enumerate}
	The constants in both statements are sharp.
\end{prop}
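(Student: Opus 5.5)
The plan is to read the two items as statements about where the threshold values $\sqrt{\tfrac32}$ and $\sqrt{\tfrac43}$ sit relative to $\fc(f)$ when $X$ carries no metric of the indicated positivity type. The proof then splits into two parts for each item. The \emph{positivity part} shows that an immersion with $\fc(f)$ strictly below the constant forces a conformal metric of the indicated positivity. The \emph{sharpness part} exhibits a manifold with no such metric and an immersion realizing the constant exactly. I should say plainly that the printed inequality ``$\fc(f)<\ldots$'' cannot hold literally as a universal upper bound: rescaling any immersion into a small ball makes $\fc(f)$ as large as we like. So the content the plan establishes is the dichotomy at the stated constants together with sharpness of those constants. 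The positivity halves follow from results already proved in the paper, while the sharpness halves need explicit examples.

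\emph{Item (1).} Assume $\fc(f)<\sqrt{\tfrac32}$. By Proposition~\ref{prop:secPetrunin}, $\tilde g=e^{-3|\bx|^2}g$ has $\widetilde{\sec}>0$, and summing over an orthonormal frame gives $\widetilde{\Ric}>0$. Hence a manifold admitting no metric with $\Ric>0$ cannot carry such an immersion, which places $\sqrt{\tfrac32}$ as the threshold in (1). One could redo the estimate in the proof of Proposition~\ref{prop:secPetrunin} directly for $\widetilde\Ric$ using \eqref{eq:confRic}, but I expect no gain. The reason is that the model map \eqref{eq:SnS1map} has equality in Lemma~\ref{lemm:sff-bds-off-diag} in the relevant directions, so tracing cannot improve the constant. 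For sharpness I take $X=S^n\times S^1$. It admits no $\Ric>0$ metric, since its $\pi_1$ is infinite, contradicting Bonnet--Myers. The map \eqref{eq:SnS1map}, equivalently Proposition~\ref{prop:prod-two-spheres} with $n_2=1$, has $\fc=\sqrt{\tfrac32}$ exactly.

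\emph{Item (2).} Assume $\fc(f)<\sqrt{\tfrac43}$. For $n\ge4$, Proposition~\ref{prop:PIC2-petrunin} gives a conformal metric that is strictly PIC-2. Taking $\lambda=\mu=1$ in \eqref{eq:tilde-PIC2} yields strict PIC, which settles the positivity part. For sharpness I use the Veronese immersion of $\CC P^2$ (see Section~\ref{subsec:Veronese}), which has $\fc=\sqrt{\tfrac43}$. By Micallef--Wang, a closed PIC $4$-manifold has vanishing second homotopy modulo torsion structure, i.e.\ it is a connected sum of space forms and $S^3\times S^1$'s, so $\CC P^2$ carries no PIC metric. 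The same holds for $\HH P^2$ and $\OO P^2$ in higher dimensions via Micallef--Moore, since their homotopy groups obstruct PIC.

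I expect the main obstacle to be the sharpness half of (2), specifically verifying that the chosen model admits no strictly PIC metric at all. This step relies on external classification results rather than on the computations in this paper. If the $\CC P^2$ citation proves awkward, my fallback is the quaternionic or octonionic Veronese examples, using that a PIC manifold has $\pi_2=0$ forced only rationally up to dimension constraints.
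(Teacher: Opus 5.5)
Your proposal is correct and matches the argument the paper intends. Reading the conclusions as $\fc(f)\geq\sqrt{3/2}$ and $\fc(f)\geq\sqrt{4/3}$, the bounds follow from Propositions \ref{prop:secPetrunin} and \ref{prop:PIC2-petrunin}, using $\sec>0\Rightarrow\Ric>0$ (which needs $n\geq 2$) and PIC-2 $\Rightarrow$ PIC via $\lambda=\mu=1$; sharpness comes from $S^n\times S^1$ with \eqref{eq:SnS1map} and from the Veronese $\CC P^2$ (or $\HH P^2$, $\OO P^2$). The only fix needed is the $\CC P^2$ citation: the clean reason it has no PIC metric is Micallef--Moore ($\pi_2=0$ for closed PIC $4$-manifolds), which you already use for the other projective planes, whereas the connected-sum classification you attribute to Micallef--Wang is not their result and is not needed.
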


In other words, one cannot distinguish between $\sec>0$ and $\Ric>0$, nor strictly PIC and strictly PIC-2, with only $\cC_N(X)$.

This also raises several questions (see also Remark \ref{rema:RP3}). For example, one may ask for the values of $\cC_N(S^2\times S^2)$ or $\cC_N(S^2\times T^2)$. We had initially hoped to compute a lower bound matching Proposition \ref{prop:Sn-T2} for  $\cC_N(S^2\times T^2)$ by considering bi-Ricci curvature (cf.\ \cite{ShenYe})
\[
\BiRic(e_1,e_2) = \Ric(e_1,e_1) + \Ric(e_2,e_2) - \sec(e_1,e_2)
\] 
in place of sectional/PIC-2 in Propositions \ref{prop:secPetrunin} and \ref{prop:PIC2-petrunin} (and then appeal to \cite{BHJ} for the fact that $S^2\times T^2$ does not admit positive bi-Ricci curvature). However, it seems that this does not yield the sharp bound:
\begin{conj}\label{Conj:BiRic}
If $f: X^4\to \BB^N$ has $\fc(f) < \sqrt{\frac{12}{7}}$ then $g=f^*g_{\RR^N}$ is conformally equivalent to a metric of positive bi-Ricci curvature. 
\end{conj}
Note that $\sqrt{\frac{12}{7}} \approx 1.31$ while Proposition \ref{prop:Sn-T2} gives $\cC_N(S^2\times T^2) \leq \sqrt{\frac 95} \approx 1.34$. Algebraic manipulations of the Gauss equation \eqref{eq:Gauss} show that Conjecture \ref{Conj:BiRic} holds if we assume that the immersion lies on the unit sphere $S^{N-1}$. However, it seems difficult to extend the approach used for Proposition \ref{prop:secPetrunin} and \ref{prop:PIC2-petrunin} to the general case of a map to $\BB^N$. 

We also propose the following conjecture that could theoretically yield a matching lower bound:
\begin{conj}\label{conj:imo-ric}
For $X^n$ closed, if $f: X^n\to \BB^N$ has $\fc(f) < \sqrt{\frac 95}$ then $g=f^*g_{\RR^N}$ is conformally equivalent to a metric $\tilde g$ so that if $\tilde\lambda_1 \leq \dots \leq \tilde\lambda_4$ are the eigenvalues of $\widetilde\Ric$ at any point, then $\tilde \lambda_4 < \tilde\lambda_1+\tilde\lambda_2 + \tilde\lambda_3$. 
\end{conj}
It is an interesting problem to determine whether or not $S^2\times T^2$ admits such a metric.  Note that since $\tilde\lambda_3 \leq \tilde\lambda_4$, this condition (which could be rewritten as $2\,  \widetilde{\Ric}(u) < \widetilde{\scal}$ for all $\tilde g$-unit vectors) is stronger than $2$-convexity of $\widetilde\Ric$, i.e.\ $\tilde\lambda_1 + \tilde\lambda_2 > 0$. As a first observation, by considering the totally geodesic $T^3 \subset S^2 \times T^2$, the method from \cite{Strake} implies that one cannot find a $C^1$-path of metrics $\{g_t\}_{t\in (-\eps, \eps)}$, such that $g_0$ is a product metric and $\frac{d}{dt}|_{t=0} (\scal_{g_t} - 2\Ric_{g_t})>0$ everywhere on $S^2\times T^2$.

\bibliography{bib}
\bibliographystyle{amsalpha}

\end{document}